\newtheorem{theorem}{Theorem}[section]
\newtheorem{lemma}[theorem]{Lemma}
\newtheorem{proposition}[theorem]{Proposition}
\newtheorem{corollary}[theorem]{Corollary}
\theoremstyle{definition}
\newtheorem{definition}[theorem]{Definition}
\newtheorem{example}[theorem]{Example}
\newtheorem{remark}[theorem]{Remark}
\theoremstyle{remark}
\numberwithin{equation}{section}
\newcommand{\co}{\colon}
\newcommand{\ob}{\operatorname{Ob}}
\newcommand{\mor}{\operatorname{Mor}}
\newcommand{\cala}{{\mathcal A}}
\newcommand{\calb}{{\mathcal B}}
\newcommand{\calc}{{\mathcal C}}
\newcommand{\cald}{{\mathcal D}}
\newcommand{\calf}{{\mathcal F}}
\newcommand{\cali}{{\mathcal I}}
\newcommand{\calp}{{\mathcal P}}
\newcommand{\bfK}{{\mathbf K}}
\begin{document}

\title[Approximation for Quasicategories]{Approximation in $K$-theory for Waldhausen Quasicategories}
\author{Thomas M. Fiore}
\address{Thomas M. Fiore \\ Department of Mathematics and Statistics\\
University of Michigan-Dearborn \\ 4901 Evergreen Road \\ Dearborn,
MI 48128 \\ U.S.A.}
\email{tmfiore@umich.edu}
\urladdr{http://www-personal.umd.umich.edu/~tmfiore/}

\subjclass[2010]{Primary 19D10, 
  55N15, 
  55U10 
  ; Secondary: 18A99. 
  }

\begin{abstract}
We prove a series of Approximation Theorems in the setting of Waldhausen quasicategories. These theorems, inspired by Waldhausen's 1985 Approximation Theorem, give sufficient conditions for an exact functor of Waldhausen quasicategories to induce a level-wise weak homotopy equivalence of $K$-theory spectra.

The Pre-Approximation Theorem, which holds in the general setting of quasicategories {\it without} Waldhausen structures, provides sufficient conditions for a functor $F\co \cala \to \calb$ to restrict to an equivalence of the maximal $\infty$-groupoids in $\cala$ and $\calb$. These conditions are: $F$ reflects equivalences, every codomain morphism $Fa \to b$ factors as $(\text{equiv}) \circ F(f)$, the functor $ho(F)$ of homotopy categories is essentially surjective and full on isomorphisms, the domain quasicategory $\cala$ admits colimits of diagrams of equivalences indexed by connected finite posets, and $F$ preserves such colimits.

Our Approximation Theorems follow from the Pre-Approximation Theorem. The Approximation Theorem in the quasicategorical setting most analogous to Waldhausen's is: if an exact functor $F\co \cala \to \calb$ satisfies Waldhausen's App 1 and App 2, and the domain $\cala$ admits colimits of the aforementioned type and $F$ preserves them, then $\mathbf{K}(F)$ is a level-wise equivalence. As a corollary, if $F$ is an exact functor with $ho(F)$ an equivalence of ordinary categories, and every morphism in the domain $\cala$ is a cofibration, then $\mathbf{K}(F)$ is a level-wise equivalence.

We then introduce a version of App 2 called Cofibration App 2 that only requires factorization of cofibrations $Fa \rightarrowtail b$ as $(\text{equiv}) \circ F(\text{cofibration})$ and prove an analogous Cofibration Approximation Theorem, and a corollary for certain functors that induce an equivalence of cofibration homotopy categories.

We also prove that $S_n^\infty$ is Waldhausen equivalent to $\overline{\calf_{n-1}^\infty}$ using the mid anodyne maps known as spine inclusions, and clarify how hypotheses and notions in Waldhausen structures are related in new ways in the context of quasicategories.
\\ [2mm]

  Key words: quasicategory, $\infty$-category, $K$-theory, Waldhausen's Approximation Theorem, Approximation, Waldhausen quasicategory, Waldhausen $\infty$-category \\

\end{abstract}

\maketitle

\newpage

\tableofcontents

\section{Introduction}

When does a map $F$ induce a stable equivalence of algebraic $K$-theory spectra? Waldhausen's Approximation Theorem \cite[Theorem 1.6.7]{WaldhausenAlgKTheoryI} gives just such a criterion: if $F \co \cala \to \calb$ is an exact functor between reasonable Waldhausen categories, $F$ reflects weak equivalences, and every codomain morphism $Fa \to b$ factors as $(\text{w.e.}) \circ F(\text{cofibration})$, then $F$ induces a stable equivalence of $K$-theory spectra (even $wS_\bullet F$ is a weak homotopy equivalence).

The main purpose of the present paper is to prove the most general Approximation Theorem presently available in the context of quasicategories: if an exact functor $F$ of Waldhausen quasicategories satisfies
\begin{enumerate}
\item
{\it (App 1)} \\
$F$ reflects equivalences,
\item
{\it (App 2)} \\
For every $a \in \cala$ and every morphism $Fa \to b$ in the codomain $\calb$, there exists a cofibration $f \co a \rightarrowtail a'$ in $\cala$, an equivalence $F(a') \simeq b$ in $\calb$, and a 2-simplex in $\calb$ of the form
$$\begin{array}{c}\xymatrix{Fa \ar[rr]^-{\forall} \ar@{>->}[dr]_{\exists \;\; F(f)} & & b \\  & F(a') \ar[ur]_{\exists \;\;\text{\rm equivalence}} & }
\end{array},$$
and
\item
The domain quasicategory $\cala$ admits colimits of diagrams of equivalences indexed by connected finite posets, and $F$ preserves such colimits,
\end{enumerate}
then $F$ induces a level-wise weak homotopy equivalence of $K$-theory spectra. Recent general results in this direction worked with localizations of homotopical categories, or assumed that every map is a cofibration (see the literature review later in this introduction).

A few years after Waldhausen's Approximation Theorem appeared, Thomason--Trobaugh published a proof in
\cite[1.9.8]{ThomasonTrobaugh} that a functor which induces an equivalence of derived homotopy categories induces an equivalence in $K$-theory, under appropriate hypotheses. In 1.9.9, Thomason remarks ``Morally, it says that $K(A)$ essentially depends only on the derived category $w^{-1}A$, and thus that Waldhausen $K$-theory gives essentially a $K$-theory of the derived category.''

Since then, many mathematicians have obtained results in this direction in various contexts: Neeman \cite{NeemanIA, NeemanIB} and Dugger-Shipley \cite{DuggerShipley} for algebraic $K$-theory of rings, Schlichting's counterexample for Frobenius categories \cite{SchlichtingInventiones}, To\"en--Vezzosi \cite{ToenVezzosi} and Blumberg--Mandell \cite{BlumbergMandellAbstHomTheory} for Dwyer-Kan simplicial localizations of Waldhausen categories, and Cisinski \cite{CisinskiInvarianceOfK-Theory} for right exact functors between reasonable Waldhausen categories. Sagave \cite{SagaveSpecialApproximation}, in the context of classical Waldhausen categories, showed how to loosen Waldhausen's requirement of factorizing $Fa \to b$ mentioned above to a requirement of factorizing $Fa \to b$ when $b$ is a ``special object''. In Appendix A of \cite{Schlichting}, Schlichting showed how to replace Waldhausen's cylinder functor and cylinder axiom in Waldhausen's classical Approximation Theorem by requiring factorization of any domain morphism into a cofibration followed by a weak equivalence (the full hypotheses of Schlichting's Approximation variant are: $F$ reflects equivalences, every codomain morphism $Fa \to b$ factors as $(\text{w.e.}) \circ F(\text{cofibration})$, every morphism of $\cala$ factors as $(\text{w.e.}) \circ (\text{cofibration})$, and the weak equivalences in both the domain and codomain satisfy the 3-for-2 property).

The main contribution of the present article is to prove several versions of Waldhausen Approximation in the context of the Waldhausen {\bf quasi}categories of Barwick \cite{Barwick} and Fiore--Pieper \cite{FiorePieper}, see Definition~\ref{def:Waldhausen_quasicat}, Theorem~\ref{thm:Approximation}, Corollary~\ref{cor:Approx_hoF_equiv_and_coA=A}, Theorem~\ref{thm:CofibrationApproximation}, Corollary~\ref{cor:Approximation_when_F_induces_cof_equiv}, and Corollary~\ref{cor:Approximation_when_F_rflcts_cofs_and_hoF_is_equivalence}. All of these follow from the Pre-Approximation Theorem~\ref{thm:Pre-Approximation}, which does not require a Waldhausen structure. We recall the main notion and examples, and describe these results now.

A {\it quasicategory}, or {\it $\infty$-category}, is a simplicial set in which every inner horn has a filler. For instance Kan complexes and nerves of categories are quasicategories. A {\it Waldhausen quasicategory} is a quasicategory with zero objects together with a selected 1-full subquasicategory of cofibrations containing the equivalences. A Waldhausen quasicategory is required to have pushouts along cofibrations, and any morphism with domain any zero object is required to be a cofibration. ``Weak equivalences'' are not part of the definition, instead the ``weak equivalences'' in a Waldhausen quasicategory are by default the {\it equivalences} of the quasicategory, i.e. those morphisms that are invertible in the homotopy category of the quasicategory.\footnote{The 1-full subquasicategory on the equivalences is the maximal Kan subcomplex by \cite[Theorems~4.18 and 4.19]{JoyalQuadern}. Recall also that Kan complexes are often called $\infty$-groupoids.}

We mention here several examples of Waldhausen quasicategories. The nerve of any classical Waldhausen category with weak equivalences the isomorphisms is a Waldhausen quasicategory. For instance, the nerve of the classical Waldhausen category of based finite sets, bijections, and injections is a Waldhausen quasicategory. When a Waldhausen category comes from a model category, then Barwick's {\it relative neve} is a Waldhausen quasicategory with the same $K$-theory, see \cite[Corollary~10.18]{Barwick} and see \cite[Example~4.5]{FiorePieper} for a summary of the related results in \cite[Sections~9 and 10]{Barwick} by Barwick. Genuinely non-classical examples of Waldhausen quasicategories are the stable quasicategories of Lurie (in a stable quasicategory every morphism is considered a cofibration).  Recall that a quasicategory is {\it stable} if it admits all finite limits and colimits, and pushout squares and pullback squares coincide. Stable quasicategories are the context of Lurie's second book \cite{LurieHigherAlgebra}.

The main results of this paper begin with the Pre-Approximation Theorem~\ref{thm:Pre-Approximation}:  if $F\co \cala \to \calb$ is a functor of quasicategories that reflects equivalences, every codomain morphism $Fa \to b$ factors as $(\text{equiv}) \circ F(f)$, the functor $ho(F)$ of homotopy categories is essentially surjective and full on isomorphisms, the domain quasicategory $\cala$ admits colimits of diagrams of equivalences indexed by connected finite posets, and $F$ preserves such colimits, then $F$ restricts to an equivalence of the maximal $\infty$-groupoids of $\cala$ and $\calb$. Notice that the Pre-Approximation Theorem does not require any Waldhausen structures. The idea for the proof of Theorem~\ref{thm:Pre-Approximation} is due to Waldhausen: use Proposition~\ref{prop:extracted_from_Waldhausen} and (quasicategorical) Quillen's Theorem A.  However, we also incorporate a quasicategorical implementation of an idea of Schlichting \cite[page 132]{Schlichting}, see the discussion preceding Theorem~\ref{thm:Pre-Approximation}.

The first consequence of the Pre-Approximation Theorem is the Approximation Theorem~\ref{thm:Approximation}: if $F$ satisfies App 1 and App 2, and $\cala$ admits all finite colimits and $F$ preserves them, then $\mathbf{K}(F)$ is a level-wise equivalence of spectra. The requirement of finite colimits in $\cala$ and their preservation by $F$ could be replaced by the requirement of colimits in $\cala$ of a more specific type and their preservation by $F$, namely colimits of diagrams of equivalences indexed by connected finite posets. We prefer to require ``finite colimits''  because it is easier to state than the aforementioned more specific type of colimit. Corollary~\ref{cor:Approx_hoF_equiv_and_coA=A}, on the other hand, assumes $hoF$ is an equivalence and $co\cala=\cala$ in order to conclude $\mathbf{K}(F)$ is a level-wise equivalence of spectra.

Theorem~\ref{thm:CofibrationApproximation}, Corollary~\ref{cor:Approximation_when_F_induces_cof_equiv}, and Corollary~\ref{cor:Approximation_when_F_rflcts_cofs_and_hoF_is_equivalence} are versions of Approximation in which App 2 is replaced by the more general Cofibration App 2. In other words, instead of requiring every codomain morphism of the form $Fa \to b$ to factor as $(\text{equivalence}) \circ F(\text{cofibration})$, which would imply every morphism $Fa \to b$ is a cofibration, we merely require every cofibration $Fa \rightarrowtail b$ to factor as $(\text{equivalence}) \circ F(\text{cofibration})$, thus allowing non-cofibration morphisms $Fa \to b$ in $\calb$. In Theorem~\ref{thm:CofibrationApproximation}, Corollary~\ref{cor:Approximation_when_F_induces_cof_equiv}, and Corollary~\ref{cor:Approximation_when_F_rflcts_cofs_and_hoF_is_equivalence}, we require the {\it domain cofibration subquasicategory} $co\cala$ to admit colimits of diagrams in $\cala_\mathrm{equiv}$ indexed by connected finite posets, and $F$ is required to preserve such colimits. In Theorem~\ref{thm:CofibrationApproximation} we also require $ho(F_\text{equiv})$ and $ho\big(\big(\overline{\calf^\infty_n}F\big)_\text{\rm equiv}\big)$ to be full. On the other hand, Corollary~\ref{cor:Approximation_when_F_induces_cof_equiv} and Corollary~\ref{cor:Approximation_when_F_rflcts_cofs_and_hoF_is_equivalence} require just for $n=1$ the functor $ho\big(\big(\overline{\calf^\infty_1}F\big)_\text{\rm equiv}\big)$ to be full, because $ho(coF)$ is an equivalence of categories in those corollaries. In Corollary~\ref{cor:Approximation_when_F_rflcts_cofs_and_hoF_is_equivalence} $hoF$ is assumed to be an equivalence of categories, but the assumption that $F$ reflects cofibrations implies that $ho(coF)$ is also an equivalence of categories.

Factorization in the sense that every morphism in $\cala$ or $\calb$ factors as $(\text{equivalence}) \circ (\text{cofibration})$ is not required for the main results (indeed, factorization in the quasicategorical context is equivalent to requiring all maps to be cofibrations, a strong assumption we do not want to make).

Related Approximation results in the quasicategorical literature are the following. Barwick proved the special case of our Approximation Theorem in which all maps are cofibrations. Namely, Barwick's Corollary~8.4 of \cite{Barwick} implies: if $F\co \cala \to \calb$ is an exact functor between Waldhausen quasicategories, both of which have all maps cofibrations and have all finite colimits, and $F$ induces an equivalence $hoF \co ho\cala \to ho\calb$ of homotopy categories, then $F$ induces a stable equivalence of $K$-theory spectra (in fact he shows $F$ induces an equivalence in any ``additive theory''). Blumberg--Gepner--Tabuada proved a stronger statement under the assumption of stability (all maps in a stable quasicategory are considered to be cofibrations). Namely Blumberg--Gepner--Tabuada's Corollary~5.11 of \cite{BlumbergGepnerTabuadaI} states that a map of stable quasicategories is an equivalence if and only if it induces an equivalence of their homotopy categories. In general, Blumberg--Gepner--Tabuada study $K$-theory in \cite{BlumbergGepnerTabuadaI} as an invariant of stable quasicategories. In the present paper, we do not require stability, nor does Barwick.

Simplicial categories, on the other hand, are another model for $\infty$-categories, and recent $K$-theory literature contains some related Approximation results concerning Dwyer-Kan hammock localizations of classical Waldhausen categories. To\"en--Vezzosi \cite{ToenVezzosi} observed already in 2004 that the $K$-theory of a ``good'' category with fibrations and weak equivalences is an invariant of the underlying $\infty$-category, namely of its Dwyer--Kan hammock localization, despite the fact that the $K$-theory cannot be reconstructed from the triangulated homotopy category, as proved by Neeman \cite{NeemanInventiones}. Equivalence of localizations is in fact closely correlated to Approximation. Blumberg-Mandell \cite[Theorems~1.5 and 1.4]{BlumbergMandellAbstHomTheory} proved that equivalence of Dwyer--Kan localizations follows from Waldhausen's Approximation axioms when $\cala$ and $\calb$ satisfy 3-for-2 and factorization, see Section~\ref{subsec:ComparisonWithOtherAuthors} of the present paper. Cisinski \cite{CisinskiInvarianceOfK-Theory} proved that this is actually an ``if and only if'' statement (see Theorems 2.9 and 3.25, Proposition 4.5, and Scholie 4.15, all in \cite{CisinskiInvarianceOfK-Theory}).

Assuming Dwyer-Kan equivalence of Dwyer-Kan localizations of weak cofibration subcategories, and a few other hypotheses, a consequence of \cite{BlumbergMandellAbstHomTheory} is a stable equivalence of $K$-theory spectra, see Remark~\ref{rem:BlumbergMandell_DKequiv_of_weak_cof_cats} for details. The present article remains entirely in the world of Waldhausen quasicategories.

\smallskip

{\bf Outline of Paper.} For readers not already familiar with quasicategories, I review in Appendix Section~\ref{sec:recollections} all the necessary results of Boardman--Vogt, Joyal, and Lurie, including homotopy, join, slice, and colimits. There I also prove a criterion for a simplicial set to be weakly contractible, which is a simplicial version of a categorical result Schlichting extracted from Waldhausen's paper \cite{WaldhausenAlgKTheoryI}. Section~\ref{sec:Motivational_Examples} rapidly presents new examples that are covered by the Cofibration Approximation Theorem~\ref{thm:CofibrationApproximation} but not the Approximation Theorem~\ref{thm:Approximation}. Some terminology from the rest of the paper is used here, but the purpose is to quickly motive the rest of the work. In Section~\ref{sec:Waldhausen Quasicategories}, I recall the notion of Waldhausen quasicategory, discuss some of its consequences, and introduce the variants $\overline{S_\bullet^\infty}$ and $\overline{\calf_\bullet^\infty}$ of Waldhausen's constructions using spines $I[n]$ rather than simplices $\Delta[n]$. Section~\ref{sec:approximation_theorems} is the heart of the paper, it begins with the main technical result of the paper, the Pre-Approximation Theorem~\ref{thm:Pre-Approximation}, then proves various quasicategorical Approximation Theorems, and compares the axioms Pre-App 2, App 2, and Cofibration App 2. Section~\ref{sec:approximation_theorems} also contains a more detailed comparison with other results in the literature.

\section{Examples of Quasicategorical Cofibration Approximation Theorem not Covered by Approximation Theorem} \label{sec:Motivational_Examples}

In this section we provide some motivational examples for the quasicategorical Cofibration Approximation Theorem~\ref{thm:CofibrationApproximation} and the rest of the paper. We freely use terminology that is introduced in the sequel.

The quasicategorical Cofibration Approximation Theorem~\ref{thm:CofibrationApproximation} has genuinely new examples. We begin with the inclusion $co\calc \hookrightarrow \calc$, and then specify it to a homotopy version of based finite sets, called $h\calf in_\ast$. The comparison of  $h\calf in_\ast$ with the classical $\calf in_\ast$ also delivers examples of several results in this paper.

A consequence of the Cofibration Approximation Theorem~\ref{thm:CofibrationApproximation} is the following.
\\ \\
{\bf Corollary}~\ref{cor:cofibration_inclusion}. {\it Let $\calc$ be Waldhausen quasicategory. Then $co\calc \hookrightarrow \calc$ induces a level-wise equivalence in $K$-theory. }
\\ \\
\noindent If $\calc$ has a morphism that is not a cofibration, then the inclusion $co\calc \hookrightarrow \calc$ does not satisfy App 2, and the Approximation Theorem~\ref{thm:Approximation} does not apply, but the Cofibration Approximation Theorem~\ref{thm:CofibrationApproximation} does.

Consider now the quasicategory $h\calf in_\ast$ of based ``homotopy finite, homotopy discrete spaces.'' This quasicategory is the simplicial nerve of the Kan enriched category of based Kan complexes of the form
\begin{equation} \label{equ:object_of_hFinstar}
\coprod_{i=1}^m L_i \hspace{1in} \text{with each $L_i$ a contractible Kan complex.}
\end{equation}
A based simplicial map $f$ between such based Kan complexes is a cofibration in the Waldhausen structure on $h\calf in_\ast$ if $\pi_0f$ is injective. The equivalences in the quasicategory $h\calf in_\ast$ are the based weak homotopy equivalences between such based Kan complexes.

To prove that the equivalences in the quasicategory $h\calf in_\ast$ are the based weak homotopy equivalences between such based Kan complexes, we appeal to the foundational results on classical Quillen model structure on $\mathbf{SSet}_*$ in combination with Boardman and Vogt's results about homotopy in a quasicategory. First recall that a based map in $\mathbf{SSet}_*$ is a cofibration, fibration, or weak equivalence if and only if its underlying unbased map is. Thus, the fibrant objects in $\mathbf{SSet}_*$ are exactly the based Kan complexes, and every object of $\mathbf{SSet}_*$ is cofibrant. Consequently a based map between based Kan complexes is a weak equivalence if and only if it has a {\it based homotopy} inverse. On the other hand, a morphism in the quasicategory $h\calf in_\ast$ is an equivalence if and only if it has a homotopy inverse {\it in the quasicategory} $h\calf in_\ast$. But by \cite[Lemma~1.9]{JoyalQuadern} (attributed to Boardman-Vogt \cite{BoardmanVogt}), homotopy between morphisms in the quasicategory $h\calf in_\ast$ is the same as based homotopy of based maps between based Kan complexes. Thus, the equivalences in the quasicategory $h\calf in_\ast$ are the based weak homotopy equivalences between based Kan complexes of the form \eqref{equ:object_of_hFinstar}.

The quasicategory $h\calf in_\ast$ is a Waldhausen quasicategory in the sense of Definition~\ref{def:Waldhausen_quasicat}, but {\it not} a Waldhausen category: ordinary pushouts in the {\it category} of such based Kan complexes \eqref{equ:object_of_hFinstar} do not exist. For instance, the following pushout in $\mathbf{SSet}$ of 3 such based\footnote{We take $1'$ to be the basepoint in the top two objects, and $1$ to be the basepoint in the bottom right object.} Kan complexes \eqref{equ:object_of_hFinstar} is not a Kan complex.
\begin{equation} \label{equ:hFinstar_category_not_closed}
\begin{array}{c}
\xymatrix@R=3pc@C=5pc{\{1', 1'' \} \ar@{^{(}->}[r] \ar[d] \ar@{}[dr]|{\text{strict p.o. in category $\mathbf{SSet}$}} & N(0\cong 1') \coprod N(1'' \cong 2) \ar[d] \\ \ast \ar[r] & \left[0 \rightleftarrows 1 \rightleftarrows 2\right]}
\end{array}
\end{equation}
The two left simplicial sets $\{1', 1''\}$ and $\ast$ are finite and discrete, hence are homotopy finite and homotopy discrete Kan complexes. The upper right simplicial set $N(0\cong 1') \coprod N(1'' \cong 2)$ is the disjoint union of two nerves of contractible groupoids, so is a homotopy finite and homotopy discrete Kan complex. The top map is a trivial cofibration in the Quillen model structure on $\mathbf{SSet}$, i.e. it is a mono weak homotopy equivalence so the pushout is even a homotopy pushout in $\mathbf{SSet}$. The top map is of course also $\pi_0$-injective, so is a cofibration in the Waldhausen structure on the simplicial nerve $h\calf in_\ast$.

The bottom right corner pushout object we have indicated only with its non-degenerate 1-skeleton $\left[0 \rightleftarrows 1 \rightleftarrows 2\right]$. It is not a Kan complex because the inner horn $0 \to 1 \to 2$ does not have a filler, so it is not even a quasicategory.

Since the pushout object in \eqref{equ:hFinstar_category_not_closed} is not of the form \eqref{equ:object_of_hFinstar}, the {\it category} of based Kan complexes of the form \eqref{equ:object_of_hFinstar} is not a Waldhausen category.

However, the quasicategory $h\calf in_\ast$ does admit colimits along its cofibrations. Namely, since $h\calf in_\ast$ is the simplicial nerve of a Kan-enriched simplicial category, homotopy colimits in the simplicial category are the same as the colimits in the quasicategory $h\calf in_\ast$ by \cite[Theorem~4.2.4.1]{LurieHigherToposTheory}. We can form homotopy pushouts along $\pi_0$-injective maps of based Kan complexes of the form \eqref{equ:object_of_hFinstar} similar to how pushouts along injective based maps in $\calf in_\ast$ can be formed.

Thus $h\calf in_\ast$ is a genuine example of a Waldhausen quasicategory, and the inclusion
$$\xymatrix{ co(h\calf in_\ast)  \ar@{^{(}->}[r]  &   h\calf in_\ast}$$
induces a level-wise equivalence in $K$-theory by the Cofibration Approximation Theorem, but {\it not} by the Approximation Theorem.

We may now also consider the following inclusion functor and path component functor.
\begin{equation} \label{equ:i_and_pi0}
\xymatrix{i \colon N\calf in_\ast \ar@{^{(}->}[r] & h\calf in_\ast } \hspace{1in} \xymatrix{\pi_0\colon h\calf in_\ast \ar[r] & N\calf in_\ast }
\end{equation}

The inclusion functor $i$ does not satisfy App 2. Namely, to satisfy App 2, the non-injective constant $c$ map would have to factor as a composite of injections.
$$\xymatrix@R=4pc{i\{a,b\} \ar[rr]^-{\text{constant}_c} \ar@{>..>}[dr]_{\exists ? \;\; i(\text{based injective})\hspace{.25in}} & & \{c\} \\  & \exists ? \;\; i(\text{based finite set}) \ar@{..>}[ur]_{\hspace{.25in} \exists?  \;\;\text{\rm equivalence=bijection}} & }$$
But that is impossible, so $i$ does not satisfy App 2. But $i$ does satisfy Cofibration App~2. Any given $\pi_0$-injective map in $h\calf in_\ast$
$$\xymatrix{g \colon i(\text{based finite set}) \ar@{>->}[r] & L}$$
with codomain a based Kan complex $L$ of the form \eqref{equ:object_of_hFinstar} factors as
$$\xymatrix@R=4pc@C=4pc{i(\text{based finite  set}) \ar[rr]^-{\text{given $\pi_0$-injective map }g} \ar@{>->}[dr]_{\exists \;\; i(\text{based injective})\hspace{.25in}} & & L \\  & i(\pi_0 L) \ar@{>->}[ur]_{\hspace{.1in} \exists  \;\;\text{\rm equivalence}} & }$$
where the based injective map sends a point to the path component it lands in, and the equivalence sends a path component in $\pi_0L$ to the point in that path component in the image of $g$. So $i$ satisfies Cofibration App 2.

By similar considerations, one can show that $\pi_0$ in \eqref{equ:i_and_pi0} does not satisfy App 2 but does satisfy Cofibration App 2.

Both $i$ and $\pi_0$ in \eqref{equ:i_and_pi0} satisfy the hypotheses of the Cofibration Approximation Theorem~\ref{thm:CofibrationApproximation}, Corollary~\ref{cor:Approximation_when_F_induces_cof_equiv}, and Corollary~\ref{cor:Approximation_when_F_rflcts_cofs_and_hoF_is_equivalence}, so they each induce a level-wise equivalence of Waldhausen $K$-theory spectra.

We should not be surprised $h\calf in_\ast$ and $N\calf in_\ast$ have the same $K$-theory: $i$ and $\pi_0$ in \eqref{equ:i_and_pi0} are, after all, inverse Waldhausen equivalences of Waldhausen quasicategories in the sense of \cite[Definition~7.8 and Proposition~7.9]{FiorePieper}. But it is still interesting to note that the Approximation Theorem~\ref{thm:Approximation} does not apply, though Cofibration Approximation~\ref{thm:CofibrationApproximation} does.

Further examples of non-equivalences that satisfy the hypotheses of the Cofibration Approximation Theorem~\ref{thm:CofibrationApproximation} and Corollary~\ref{cor:Approximation_when_F_induces_cof_equiv} are the composites

\begin{equation}
\xymatrix{co(h\calf in_\ast) \ar@{^{(}->}[r] & h\calf in_\ast \ar[r]^{\pi_0} & N\calf in_\ast }
\end{equation}
\begin{equation}
\xymatrix{co(\calf in_\ast) \ar@{^{(}->}[r] & N\calf in_\ast \ar@{^{(}->}[r]^i & h\calf in_\ast }.
\end{equation}
These composites satisfy Cofibration App 2 because both second morphisms do and both first morphisms are essentially surjective, see Proposition~\ref{prop:compositions_satisfy_App2}~\ref{prop:compositions_satisfy_App2:hyp:FE}.

\section{Waldhausen Quasicategories} \label{sec:Waldhausen Quasicategories}

We first recall some background on Waldhausen quasicategories from the paper of Fiore--Pieper \cite{FiorePieper}, discuss consequences of the definition, and introduce a variant $\overline{\calf_n^\infty}\calc$ of Waldhausen's $\calf_n\calc$. Recall that a subquasicategory $A$ of a quasicategory $X$ is {\it 1-full} if any simplex of $A$ is in $X$ if and only if all of its edges are in $X$.

\subsection{Waldhausen Quasicategories and Exact Functors} \label{subsec:Review_of_WaldhausenQCats} \leavevmode \\

\begin{definition}[Waldhausen quasicategory, \cite{FiorePieper}] \label{def:Waldhausen_quasicat}
A {\it Waldhausen quasicategory} consists of a quasicategory $\calc$ with zero objects and a subquasicategory $co \calc$, the 1-simplices of which are called {\it cofibrations} and denoted $\rightarrowtail$, such that
\begin{enumerate}
\item \label{def:Waldhausen_quasicat:(i)}
The subquasicategory $co \calc$ is 1-full in $\calc$ and contains all equivalences in $\calc$,
\item \label{def:Waldhausen_quasicat:(ii)}
For each object $A$ of $\calc$ and any zero object $\ast$ of $\calc$, every morphism $\ast \to A$ is a cofibration,
\item \label{def:Waldhausen_quasicat:(iii)}
The pushout of a cofibration along any morphism exists, and every pushout of a cofibration along any morphism is a cofibration.
\end{enumerate}
\end{definition}

Barwick's notion of Waldhausen $\infty$-category in \cite[Definition~2.7]{Barwick} is equivalent to Definition~\ref{def:Waldhausen_quasicat}. See Fiore--Pieper \cite[Proposition~4.3]{FiorePieper} for a proof.

Though Definition~\ref{def:Waldhausen_quasicat} looks much like the classical definition, there are some important differences which have far-reaching consequences. Perhaps most prominently, a classical Waldhausen category comes equipped with a class of ``weak equivalences'' which contains the isomorphisms, whereas a Waldhausen {\it quasi}category has its class of ``weak equivalences'' pre-selected as the {\it equivalences} of the underlying quasicategory. These are exactly the maps which become isomorphisms in the homotopy category.

Consequently, extra hypotheses on the equivalences (typically needed in a discussion of Approximation) are rarely needed because they automatically hold. For instance, the 3-for-2 property for equivalences in a quasicategory $X$ follows immediately from the 3-for-2 property for isomorphisms in the homotopy category $\tau_1X$ and the fact that 2-simplices in $X$ give rise to commutative triangles in $\tau_1X$. Even more strongly, the equivalences in a quasicategory $X$ satisfy the 6-for-2 property\footnote{A class of maps has the {\it 6-for-2 property} if whenever we have any three composable maps $\overset{u\;}{\rightarrow}\overset{v\;}{\rightarrow}\overset{w\;}{\rightarrow}$ with $vu$ and $wv$ in the class, we can conclude that $u$, $v$, $w$, and $wvu$ are also in the class. The 6-for-2 property implies the 3-for-2 property.} by a similar argument applied to $Sk^2\Delta[3] \to X$ using the 6-for-2 property of the isomorphisms in $\tau_1X$ (the isomorphisms in any category satisfy the 6-for-2 property).

Another consequence of choosing the equivalences as the ``weak equivalences'' is that any map homotopic to a cofibration is also a cofibration, see \cite[Section 4]{FiorePieper}.

We also know that $\tau_1 (co \calc)$ is naturally a subcategory of $\tau_1 \calc$ which contains the isomorphisms of $\tau_1 \calc$. Namely from the 1-fullness of $co\calc$ in $\calc$ in Definition~\ref{def:Waldhausen_quasicat}, it follows that for any cofibrations $f,g,h$ there is a 2-simplex in $co\calc$ with boundary $(g,h,f)$ if and only if there is a 2-simplex in $\calc$ with boundary $(g,h,f)$.

The choice of the ``weak equivalences'' as the equivalences also changes the way that classical hypotheses are related to one another. For instance, the factorization axiom in a Waldhausen quasicategory is equivalent to the requirement that every map is a cofibration. A Waldhausen quasicategory $\calc$ is said to {\it admit factorization} if for any morphism $f$ of $\calc$ there exists a 2-simplex $\sigma$ with boundary $$\partial \sigma=(d_0\sigma,d_1\sigma,d_2\sigma)=(\text{ equivalence }, f, \text{ cofibration }).$$
As a proof that factorization is equivalent to all maps being cofibrations, if $\calc$ admits factorization, and $f$ is any morphism in $\calc$, then
$[f]=[w][c]$ in the homotopy category for some equivalence $w$ and some cofibration $c$. But every equivalence is a cofibration, and $\tau_1(co\calc)$ is naturally a subcategory of $\tau_1\calc$, so $[f]$ is the homotopy class of a cofibration. But any map homotopic to a cofibration is a cofibration, so $f$ is a cofibration. The converse is clear.

Every ``weak cofibration'' in a Waldhausen {\it quasi}category is actually a cofibration, so we make no distinction. More precisely, in \cite{BlumbergMandellActa} and \cite{BlumbergMandellAbstHomTheory}, Blumberg--Mandell call a morphism $f$ in a {\it classical} Waldhausen category a {\it weak cofibration} if there is a zig-zag of weak equivalences in the arrow category from $f$ to a cofibration. If we have a commutative square in a Waldhausen quasicategory $\calc$ (that is a map $\Delta[1] \times \Delta[1] \to \calc$)
\begin{equation} \label{equ:diagram_for_homotopy_repleteness}
\begin{array}{c}
\xymatrix{x \ar[r]^r \ar[d]_{\text{equiv}}^u & y \ar[d]^{\text{equiv }}_v \\ x' \ar[r]_{r'} & y'}
\end{array}
\end{equation}
then in the homotopy category $\tau_1\calc$ we have $$[r]=[v^{-1}][r'][u]\;\;\;\ \text{and} \;\;\;[r']=[v][r][u^{-1}],$$
so $[r]$ is in $\tau_1(co\calc)$ if and only if $[r']$ is in $\tau_1(co\calc)$ (recall that $\tau_1(co\calc)$ contains the isomorphisms of $\tau_1\calc$). But the homotopy class $[r]$ is in $\tau_1(co\calc)$ if and only if the morphism $r$ is in $co \calc$, and similarly for $[r']$ and $r'$. So in any vertical zig-zag of commutative squares like \eqref{equ:diagram_for_homotopy_repleteness} in a Waldhausen quasicategory, with all the vertical arrows weak equivalences, if any one of the horizontal arrows is a cofibration, then all of the horizontal arrows are cofibrations, and weak cofibrations are cofibrations.

Similarly, a {\it homotopy cocartesian square} in the sense of \cite{BlumbergMandellActa} and \cite{BlumbergMandellAbstHomTheory}, but in a Waldhausen {\it quasi}category, is merely a pushout square in which one of the legs is a cofibration (the opposite morphism will then also be a cofibration). This follows from the fact that pushouts in a quasicategory are invariant under equivalence.

After this discussion of the far-reaching consequences of the definition of Waldhausen quasicategory, we can now return to the review of the theory itself.

\begin{definition}[Exact functor]
Let $\cala$ and $\calb$ be Waldhausen quasicategories.
A functor $F\colon \cala \to \calb$ is called {\it exact} if it sends zero objects of $\cala$ to zero objects of $\calb$, it sends cofibrations to cofibrations, and maps each pushout square along a cofibration to a pushout square along a cofibration.
\end{definition}

By 1-fullness of the subquasicategory of cofibrations, we have $F(co\cala) \subseteq co \calb$ for the entire cofibration sub quasicategory $co\cala$. Every map of quasicategories sends equivalences to equivalences, so we also have $F(\cala_\text{equiv}) \subseteq \calb_\text{equiv}$ by 1-fullness of the maximal sub Kan complex. By the comments preceding the definition, an exact functor between Waldhausen quasicategories preserves weak cofibrations and homotopy cocartesian squares.

\subsection{The $S_\bullet^\infty$ Construction} \label{subsec:S} \leavevmode \\

We next recall the $S_\bullet^\infty$ construction. See also \cite[1.2.2.2 and 1.2.2.5]{LurieHigherAlgebra} of Lurie, and \cite[Section~7.2]{BlumbergGepnerTabuadaI} where Blumberg--Gepner--Tabuada compare $S_\bullet^\infty$ with the $S_\bullet'$ construction of \cite{BlumbergMandellActa} and \cite{BlumbergMandellAbstHomTheory} in the case of a simplicial model category which admits all finite homotopy colimits.\footnote{In \cite[Corollary 7.7]{BlumbergGepnerTabuadaI}, Blumberg--Gepner--Tabuada consider a simplicial model category $\cala$ and a small full sub simplicial category $\calc$ which has all finite homotopy colimits, apply the $S_\bullet^\infty$ construction of Lurie to the simplicial nerve of the full sub simplicial category on the fibrant-cofibrant objects of $\calc$, and compare it with the nerve of $wS'_\bullet$. Blumberg--Gepner--Tabuada primarily work with Waldhausen quasicategories in which all maps are cofibrations, so they do not usually explicitly mention cofibrations. For instance, every map in a model category is a weak cofibration, and in a Waldhausen quasicategory weak cofibrations are the same as cofibrations, so in \cite[Corollary 7.7]{BlumbergGepnerTabuadaI} there is no need to specify that the horizontal maps in the $S^\infty_\bullet$ construction applied to a model category are cofibrations.} See also Fiore--Pieper \cite{FiorePieper}. For a fibrational version of $S_\bullet^\infty$, see Barwick \cite[Section 5]{Barwick}.

Let $\text{Ar}[n]$ be the {\it category of arrows} in $[n]$. It is the partially ordered set with elements $(i,j)$ such that $0 \leq i \leq j \leq n$, and with the order $(i,j) \leq (i',j')$ whenever $i \leq i'$ and $j \leq j'$. The category $\text{Ar}[n]$ is an upper-triangular subgrid of an $n \times n$-grid of squares. The upper triangular subgrid $\text{Ar}[n]$ contains the main diagonal of objects, but none below it.

\begin{definition}[$S_\bullet^\infty$ Construction] \label{def:Sbulletinfinity}
Let $\mathcal{C}$ be a Waldhausen quasicategory. An {\it $[n]$-complex} is a map of simplicial sets $A \co N\text{Ar}[n] \to \mathcal{C}$ such that
\begin{enumerate}
\item \label{def:Sbulletinfinity:i}
For each $j \in [n]$, $A(j,j)$ is a zero object of $\mathcal{C}$, possibly different for each $j$,
\item \label{def:Sbulletinfinity:ii}
For each $i \leq j \leq k$, the morphism $A(i,j) \to A(i,k)$ is a cofibration,
\item \label{def:Sbulletinfinity:iii}
For each $i \leq j \leq k$, the diagram
$$\xymatrix{A(i,j) \ar@{>->}[r] \ar[d] & A(i,k) \ar[d] \\ A(j,j) \ar@{>->}[r] & A(j,k)}$$
is a pushout square in $\mathcal{C}$.
\end{enumerate}
Let $S_n^\infty \calc$ be the 0-full sub simplicial set of $\text{Map}(N\text{Ar}[n],\mathcal{C})$ on the $[n]$-complexes.\footnote{In \cite{LurieHigherAlgebra} and \cite{BlumbergGepnerTabuadaI}, which have all maps cofibrations, $S^{\infty}_n\mathcal{C}$ is also called $\mathrm{Gap}([n], \calc)$.} The {\it $S_\bullet^\infty$ construction} of $\mathcal{C}$ is the simplicial quasicategory $S_\bullet^\infty \mathcal{C}$ defined by
$$S_n^\infty \mathcal{C}\subseteq \mathbf{SSet}(N\left(\text{Ar}[n]\right) \times \Delta[-],\mathcal{C}).$$
The objects of $S_n^\infty\mathcal{C}$ are sequences of cofibrations in $\calc$
\begin{equation} \label{equ:sequence_of_cofibrations}
\xymatrix{\ast \ar@{>->}[r] & A_{0,1} \ar@{>->}[r] & A_{0,2} \ar@{>->}[r] & \cdots  \ar@{>->}[r] & A_{0,n}}
\end{equation}
with a choice of quotient $A_{0,j}/A_{0,i} := A_{i,j}$ for each $i \leq j$, and a choice of all composites with simplices that fill them.
The face and degeneracy maps of the simplicial object $S_\bullet^\infty \mathcal{C}$ in the category $\mathbf{QCat}$ are induced from $\Delta$ via the category of arrows construction. In particular this means, if $A \co N\text{Ar}[n] \to \mathcal{C}$ and $\alpha\co [k] \to [n]$, then $\alpha^* A=A \circ (\alpha_* \times \alpha_*)\co N\text{Ar}[k] \to \mathcal{C}$.
\end{definition}

Each quasicategory $S_n^\infty\calc$ is a Waldhausen quasicategory. A morphism $f\co A \to B$ in $S_n^\infty\mathcal{C}$ is a {\it cofibration in $S_n^\infty\mathcal{C}$} if each $f_{0,j}$ is a cofibration {\it and} each pushout morphism
$$A_{0,j} \cup_{A_{0,j-1}} B_{0,j-1} \to B_{0,j}$$ is a cofibration in $\mathcal{C}$ for each $1 \leq j \leq n$. If a morphism is level-wise homotopic to a cofibration in $S_n^\infty\mathcal{C}$, then it is also a cofibration in $S_n^\infty\mathcal{C}$. A natural transformation $f\co A \to B$ is an equivalence in $S_n\mathcal{C}$ if and only if each component $f_{i,j}$ is an equivalence in $\calc$ by \cite[Theorem 5.14]{JoyalQuadern} or \cite[Corollary 3.7]{FiorePieper}. Because all the squares in an object are pushouts, this is equivalent to requiring every $f_{0,j}\co A_{0,j} \to B_{0,j}$ to be an equivalence in $\mathcal{C}$ (the pushout of an equivalence in a quasicategory along any morphism is also an equivalence, see \cite[Lemma~3.23]{FiorePieper}). See \cite[8.3.15]{RognesTextbook} for the case of Waldhausen categories.

\begin{definition} \label{def:nth-K-theory-space}
The {\it $n$-th $K$-theory space} is
$$\bfK_n(\calc):= \big| \left(S_\bullet^\infty \cdots S_\bullet^\infty \calc \right)_\text{equiv}\big|$$
for $n\geq0$, where $S_\bullet^\infty$ appears $n$ times.
\end{definition}

The $K$-theory spaces form an $\Omega$-spectrum $\bfK(\calc)$ beyond the 0-th term.

\subsection{The $\overline{S_\bullet^\infty}$ Construction, or the $S_\bullet^\infty$ Construction without Composites} \leavevmode \\

An important difference between categories and quasicategories motivates our variant of the classical definition: a sequence of $n$ morphisms in a category, head to tail, already has uniquely determined composites of all head-to-tail subseqences and their composites, whereas in a {\it quasi}category such composites for a head-to-tail sequence exist but are not chosen. For a category $\cald$ for instance, if $I[n]$ denotes the {\it spine} of the $n$-simplex $\Delta[n]$, namely the 1-dimensional subcomplex of $\Delta[n]$ given by the union of the edges $[i-1,i]$ for $1\leq i \leq n$, then we have
$$\mathbf{SSet}(I[n],N\cald)\cong\mathbf{Cat}(\tau_1 I[n], \cald) \cong \mathbf{Cat}(\tau_1 \Delta[n], \cald)\cong\mathbf{SSet}(\Delta[n],N\cald).$$
In other words, indicating a sequence of $n$ morphisms in a category $\cald$ is tantamount to indicating all of the composites of subsequences (this is one reason why it is customary in category theory to only draw the $n$ morphisms to indicate the $n$ morphisms {\it along with} all of their composites).

To see how such composites of subsequences exist in a quasicategory $X$, notice that a map $I[n] \rightarrow X$ is a sequence of $n$ morphisms in $X$, while a map $\Delta[n] \rightarrow X$ is a sequence of $n$ morphisms in $X$ together with a choice of composites for all possible subsequences and their composites, together with simplices which make them commute.
The
{\it spine inclusion} $I[n] \hookrightarrow \Delta[n]$ is mid anodyne \cite[Proposition 2.13]{JoyalQuadern} so that the following lift exists.
$$\xymatrix{I[n] \ar[r] \ar[d]_{\text{mid anodyne}} & X \ar[d]^{\text{mid fibration}} \\ \Delta[n] \ar[r] \ar@{-->}[ur] & \ast }$$

Nevertheless, there is little theoretical difference in considering maps $I[n] \to X$ versus maps $\Delta[n] \to X$ because for any quasicategory $X$, the spine inclusion induces an equivalence $X^{\Delta[n]} \to X^{I[n]}$ by \cite[Propositions 2.27 and 2.29]{JoyalQuadern}. Practically, it is easier to work with $I[n]$, and this is what we do now for $\overline{S_\bullet^\infty}$, and for $\overline{\calf_\bullet^\infty}$ in Section~\ref{subsec:F}.

\begin{definition}[$\overline{S_\bullet^\infty}$ Construction] \label{def:overlineSbulletinfinity}
Let $\overline{N\text{Ar}[n]}$ be the intersection of $N\text{Ar}[n]$ with the 2-dimensional simplicial set $I[n] \times I[n]$.
A {\it restricted $[n]$-complex in a Waldhausen quasicategory $\calc$} is a map of simplicial sets $A \co \overline{N\text{Ar}[n]} \to \mathcal{C}$ such that
\begin{enumerate}
\item \label{def:overlineSbulletinfinity:i}
For each $j \in [n]$, $A(j,j)$ is a zero object of $\mathcal{C}$, possibly different for each $j$,
\item \label{def:overlineSbulletinfinity:ii}
For each $i \leq j$, the morphism $A(i,j) \to A(i,j+1)$ is a cofibration,
\item \label{def:overlineSbulletinfinity:iii}
For each $i \leq j$, the diagram
$$\xymatrix{A(i,j) \ar@{>->}[r] \ar[d] & A(i,j+1) \ar[d] \\ A(i+1,j) \ar@{>->}[r] & A(i+1,j+1)}$$
is a pushout square in $\mathcal{C}$.
\end{enumerate}
Then $\overline{S_n^\infty}\calc$ is the full sub-simplicial set of $\mathrm{Map}(\overline{N\text{Ar}[n]}, \calc)$ on the restricted $[n]$-complexes. The cofibrations in $\overline{S_n^\infty}\calc$ are defined just as in $S_n^\infty\calc$.
\end{definition}

Note that the $\overline{S_\bullet^\infty}$ construction is not a simplicial object.

\begin{remark}[Equivalent descriptions of $S_n^\infty \calc$ and $\overline{S_n^\infty} \calc$] \label{rem:zS}
Recall that $N\text{Ar}[n]$ is the upper triangular part (including main diagonal) of the diagram $\Delta[n] \times \Delta[n]$, and similarly $\overline{N\text{Ar}[n]}$ is the upper triangular part (including main diagonal) of the diagram $I[n] \times I[n]$. We can non-uniquely extend any element of $S_n^\infty \calc$ to a functor $\Delta[n] \times \Delta[n] \to \calc$ which is a zero object on each $(i,j)$ with $i > j$.  Similarly, we can non-uniquely extend any element of $\overline{S_n^\infty} \calc$ to a functor $I[n] \times I[n] \to \calc$ which is $\ast$ on each $(i,j)$ with $i > j$. More precisely, let $zS_n^\infty \calc$ be the sub quasicategory of $\calc^{\Delta[n] \times \Delta[n]}$ that is 0-full on the functors $A\colon \Delta[n] \times \Delta[n] \to \calc$ which are zero objects on each $(i,j)$ with $i > j$ and satisfy \ref{def:Sbulletinfinity:i}, \ref{def:Sbulletinfinity:ii}, and \ref{def:Sbulletinfinity:iii} of Definition~\ref{def:Sbulletinfinity}. Similarly let $\overline{zS_n^\infty} \calc$ be the sub quasicategory of $\calc^{I[n] \times I[n]}$ that is 0-full on the functors $A\colon \Delta[n] \times \Delta[n] \to \calc$ which are zero objects on each $(i,j)$ with $i > j$ and satisfy \ref{def:overlineSbulletinfinity:i}, \ref{def:overlineSbulletinfinity:ii}, and \ref{def:overlineSbulletinfinity:iii} of Definition~\ref{def:overlineSbulletinfinity}.

Then the forgetful functors
$$\xymatrix{zS_n^\infty \calc \ar[r] & S_n^\infty \calc}$$
$$\xymatrix{\overline{zS_n^\infty \calc} \ar[r] & \overline{S_n^\infty} \calc}$$
are equivalences of quasicategories.
\end{remark}

\begin{proposition}[Waldhausen equivalence of $S_n^\infty \calc$ with $\overline{S_n^\infty} \calc$] \label{prop:Sn_equivalent_to_Snbar}
For any Waldhausen quasicategory $\calc$, the forgetful functor $S_n^\infty \calc \to \overline{S_n^\infty} \calc$ induced by the inclusion $\overline{N\mathrm{Ar}[n]}\hookrightarrow N\mathrm{Ar}[n]$ is a Waldhausen equivalence, that is, it is an exact functor which admits an exact pseudo inverse (see \cite[Section~6]{FiorePieper} for more on Waldhausen equivalences).
\end{proposition}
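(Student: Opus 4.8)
The plan is to deduce the statement from a purely simplicial fact: that the inclusion $\iota\co\overline{N\mathrm{Ar}[n]}\hookrightarrow N\mathrm{Ar}[n]$ is mid anodyne. Granting this, the standard closure properties of the mid anodyne maps give, for every quasicategory $X$, that the restriction map $X^{N\mathrm{Ar}[n]}\to X^{\overline{N\mathrm{Ar}[n]}}$ is a trivial fibration, since the pushout-product of a mid anodyne map with a monomorphism is mid anodyne and $X\to\ast$ is a mid fibration. I would then identify the forgetful functor $U\co S_n^\infty\calc\to\overline{S_n^\infty}\calc$ with the pullback of $\calc^{N\mathrm{Ar}[n]}\to\calc^{\overline{N\mathrm{Ar}[n]}}$ along the inclusion $\overline{S_n^\infty}\calc\hookrightarrow\calc^{\overline{N\mathrm{Ar}[n]}}$, conclude that $U$ itself is a trivial fibration, hence an equivalence of quasicategories, and finally upgrade this to a Waldhausen equivalence.

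For the pullback identification, note first that $\overline{N\mathrm{Ar}[n]}$ and $N\mathrm{Ar}[n]$ have the same vertices, and that $U$ visibly sends $[n]$-complexes to restricted $[n]$-complexes. The point to check is the converse: if $A\co N\mathrm{Ar}[n]\to\calc$ has $A\circ\iota$ a restricted $[n]$-complex, then $A$ is an $[n]$-complex. The zero-object condition is identical. Each edge $A(i,j)\to A(i,k)$ has homotopy class the composite of the spine maps $A(i,j)\rightarrowtail A(i,j+1)\rightarrowtail\cdots\rightarrowtail A(i,k)$, which are cofibrations by hypothesis; since $\tau_1(co\calc)$ is a subcategory of $\tau_1\calc$, the class of $A(i,j)\to A(i,k)$ lies in it, so $A(i,j)\to A(i,k)$ is a cofibration by $1$-fullness. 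Each square required to be a pushout in an $[n]$-complex is a horizontal-vertical pasting of the unit squares required to be pushouts in a restricted $[n]$-complex, hence is a pushout by the pasting law for pushouts in a quasicategory. Thus $S_n^\infty\calc$ is the pullback of $\overline{S_n^\infty}\calc$ along the trivial fibration, and since trivial fibrations are stable under pullback, $U$ is a trivial fibration.

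It remains to see that $U$ is a Waldhausen equivalence. It is exact: it preserves the distinguished zero (the constant $[n]$-complex on $\ast$ restricts to the constant restricted $[n]$-complex on $\ast$); it preserves cofibrations, since those of $\overline{S_n^\infty}\calc$ are defined by the same conditions on the entries $A_{0,j}$ and their spine maps; and it preserves pushouts along cofibrations, which in both constructions are computed objectwise in $\calc$ while $U$ merely discards the objects outside $\overline{N\mathrm{Ar}[n]}$. The same remarks show $U$ reflects zero objects, cofibrations, and pushouts along cofibrations. Since $U$ is a trivial fibration it has the right lifting property against the monomorphism $\{\ast\}\hookrightarrow\overline{S_n^\infty}\calc$ onto the distinguished zero object, so it admits a section $t$ with $t(\ast)=\ast$; because $Ut=\id$ and $U$ reflects the relevant structure, $t$ is exact, and because $t$ is a section of an equivalence of quasicategories, $tU\simeq\id$. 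Hence $t$ is an exact pseudo inverse, which is exactly what is required (compare the treatment of Waldhausen equivalences in \cite[Section~6]{FioreLueck}).

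The crux, and the step I expect to be the main obstacle, is that $\iota$ is mid anodyne. I would prove it by a filtration of $N\mathrm{Ar}[n]$ built up from $\overline{N\mathrm{Ar}[n]}$, in the spirit of the proof that the spine inclusions $I[m]\hookrightarrow\Delta[m]$ are mid anodyne \cite[Proposition~2.13]{JoyalQuadern}. The nondegenerate simplices of $N\mathrm{Ar}[n]$ not already in $\overline{N\mathrm{Ar}[n]}$ are the chains in the poset $\mathrm{Ar}[n]$ whose first- or second-coordinate spread exceeds one; one must order these chains---by dimension, and then by a secondary statistic measuring how far they depart from the unit-square grid---so that adjoining each one fills an inner horn whose omitted face is adjoined strictly later. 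Arranging this ordering for the two-parameter grid $\mathrm{Ar}[n]$ is appreciably more delicate than the one-dimensional case, and it is precisely to isolate this phenomenon that the surrounding discussion first develops the spine point of view. A more conceptual route, which trades the combinatorics for more of the quasicategorical machinery, is to factor $\iota$ through the subcomplexes obtained by successively adjoining the higher simplices that witness the iterated-pushout relations among the $A(i,j)$, and to recognize each stage as an instance of Proposition~\ref{prop:colimiting_cocone_equivalence}.
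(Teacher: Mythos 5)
Your route is genuinely different from the paper's and would, if completed, give a slightly stronger conclusion (a trivial fibration rather than merely an equivalence of quasicategories), but the single step on which everything hinges---that $\iota\co\overline{N\mathrm{Ar}[n]}\hookrightarrow N\mathrm{Ar}[n]$ is mid anodyne---is left as a sketch, and the combinatorial filtration you describe would indeed be delicate to carry out. As written this is a gap. The claim is true, however, and can be obtained without combinatorics: the inclusion $I[n]\times I[n]\hookrightarrow\Delta[n]\times\Delta[n]$ is mid anodyne (it is a composite of two pushout-products of the mid anodyne spine inclusion with a monomorphism), and the order-preserving retraction $r\co[n]\times[n]\to\mathrm{Ar}[n]$ given by $r(i,j)=(\min(i,j),j)$ carries the grid $I[n]\times I[n]$ into $\overline{N\mathrm{Ar}[n]}$, so that $\iota$ is a retract of $I[n]\times I[n]\hookrightarrow\Delta[n]\times\Delta[n]$ in the arrow category and is therefore mid anodyne, mid anodyne maps being closed under retracts. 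Your closing ``conceptual route'' through Proposition~\ref{prop:colimiting_cocone_equivalence} does not bear on mid-anodyne-ness and seems to conflate this Proposition with Proposition~\ref{prop:SF_equivalence}; I would drop it in favor of the retract argument.

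The paper sidesteps the question of whether $\iota$ is mid anodyne. It observes only that $I[n]\times I[n]\hookrightarrow\Delta[n]\times\Delta[n]$ is a weak categorical equivalence (mid anodyne maps are weak categorical equivalences, and Cartesian products of weak categorical equivalences are weak categorical equivalences), deduces that $\calc^{\Delta[n]\times\Delta[n]}\to\calc^{I[n]\times I[n]}$ is an equivalence of quasicategories, restricts to the relevant full subquasicategories to obtain the equivalence $S_n^\infty\calc\to\overline{S_n^\infty}\calc$, and then cites \cite[Proposition~6.9]{FioreLueck} because the equivalence is exact and reflects cofibrations. Your proof pays a bit more for a bit more: the trivial-fibration statement, and the explicit pullback identification---the check, via $1$-fullness of $co\,\calc$ and pasting of pushouts, that a functor $N\mathrm{Ar}[n]\to\calc$ is an $[n]$-complex if and only if its restriction to $\overline{N\mathrm{Ar}[n]}$ is a restricted one---which makes precise the ``restrict'' step the paper leaves implicit. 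Your concluding upgrade to a Waldhausen equivalence via lifting a section is correct but longer than necessary; once you have an exact equivalence of quasicategories that reflects cofibrations, the same citation the paper uses closes the argument immediately.
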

\begin{proof}
Every mid anodyne map is a weak categorical equivalence\footnote{We recall here the notion of {\it weak categorical equivalence} from Joyal \cite{JoyalQuadern}, which Lurie calls ``categorical equivalence'' in \cite{LurieHigherToposTheory}. For any simplicial set $A$, let $\tau_0A$ be the set of isomorphism classes of objects of the category $\tau_1A$ (recall $\tau_1$ is the left adjoint to the nerve). A map of simplicial sets $u \colon A \to B$ is a {\it weak categorical equivalence} if for every quasicategory $X$ the induced map $\tau_0(X^B)\to \tau_0(X^A)$ is bijective. For maps between quasicategories, this coincides with the expected notation of equivalence: a functor between quasicategories is a weak categorical equivalence if and only if it is an equivalence in the 2-category $\mathbf{SSet}^{\tau_1}$, i.e., an equivalence of quasicategories. By \cite[Proposition~2.27]{JoyalQuadern}, a map $u\colon A \to B$ is a weak categorical equivalence if and only the map $X^u \colon X^B \to X^A$ is an equivalence of quasicategories for every quasicategory $X$. The weak categorical equivalences are the weak equivalences in Joyal's model structure on $\mathbf{SSet}$, see \cite[Theorem~6.12]{JoyalQuadern}.} \cite[Corollary~2.29]{JoyalQuadern}, and the Cartesian product of two weak categorical equivalences is a weak categorical equivalence \cite[Proposition~2.28]{JoyalQuadern}, so $I[n] \times I[n] \hookrightarrow \Delta[n] \times \Delta[n]$ is a weak categorical equivalence. Then the induced map $\calc^{\Delta[n] \times \Delta[n]} \to \calc^{I[n] \times I[n]}$ is an equivalence of quasicategories by \cite[Proposition~2.27]{JoyalQuadern}. Let us call this restriction equivalence $G$. Then so far we have the following two commutative squares and information from Remark~\ref{rem:zS}.
$$\xymatrix@C=3pc@R=3pc{\calc^{\Delta[n] \times \Delta[n]} \ar[r]^G_{\text{equiv}}  & \calc^{I[n] \times I[n]}
\\ zS^\infty_n\calc  \ar[d]_{\text{forget}}^{\text{equiv}} \ar@{^{(}->}[u]^{\text{0-full}} \ar[r]_{G\vert_{zS^\infty_n\calc}} & \overline{zS^\infty_n}\calc \ar[d]^{\text{forget}}_{\text{equiv}} \ar@{^{(}->}[u]_{\text{0-full}}
\\ S_n^\infty \calc \ar[r] & \overline{S_n^\infty} \calc }$$
The middle map, which is the restriction of the fully faithful essentially surjective map $G$ to $zS^\infty_n\calc$, is also an equivalence: it is fully faithful because $G$ is and its domain and codomain are 0-full, and it is essentially surjective because $G(zS^\infty_n\calc)=\overline{zS^\infty_n}\calc$.

Finally, by the 3-for-2 property of equivalences, the bottom map $S_n^\infty \calc \to \overline{S_n^\infty} \calc$ is also an equivalence. This equivalence is exact and reflects cofibrations, so it is a Waldhausen equivalence by \cite[Proposition~6.9]{FiorePieper}.
\end{proof}

\subsection{The $\overline{\calf_\bullet^\infty}$ Construction} \label{subsec:F} \leavevmode \\

We next define the quasicategory $\overline{\calf_{n}^\infty}\calc$ of sequences of $n$-many cofibrations {\it without} selected quotients.
Let $\calc$ be a Waldhausen quasicategory. The quasicategory $\overline{\calf_{n}^\infty}\calc$ is the 0-full subsimplicial set of $\calc^{I[n]}$ on the functors $I[n] \to co \calc$. For a fibrational version of an $\overline{\calf_\bullet^\infty}$, see Barwick \cite[Section 5]{Barwick}.

As in the classical work of Waldhausen \cite[pages 324 and 328]{WaldhausenAlgKTheoryI}, the quasicategory $\overline{\calf_{n}^\infty}\calc$ is a Waldhausen quasicategory, the cofibrations in $\overline{\calf_{n}^\infty}\calc$ are level-wise cofibrations with the additional property that the map from each pushout to each lower corner is a cofibration in $\calc$.

\begin{proposition} \label{prop:SF_equivalence}
The forgetful functor $S^\infty_n\calc \to \overline{\calf_{n-1}^\infty}\calc$ is a Waldhausen equivalence of Waldhausen quasicategories.
\end{proposition}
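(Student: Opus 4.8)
The plan is to reduce the proposition to an elementary statement about iterated pushouts along cofibrations, in the spirit of the proof of Proposition~\ref{prop:colimiting_cocone_equivalence}, and then to conclude with \cite[Proposition~6.9]{FioreLueck} exactly as in the proof of Proposition~\ref{prop:Sn_equivalent_to_Snbar}. First I would factor the functor in question as
$$S_n^\infty\calc \longrightarrow \overline{S_n^\infty}\calc \longrightarrow \calf_{n-1}^\infty\calc,$$
where the first map is the Waldhausen equivalence of Proposition~\ref{prop:Sn_equivalent_to_Snbar} and the second map $F$ is restriction along the inclusion $I[n-1]\hookrightarrow\overline{N\text{Ar}[n]}$ of the spine on the vertices $(0,1),\dots,(0,n)$ (these edges are cofibrations by condition~(ii) of a restricted $[n]$-complex, so $F$ really lands in $\calf_{n-1}^\infty\calc$). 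Since Waldhausen equivalences compose, it suffices to treat $F$. I would note that $F$ is exact and reflects cofibrations essentially by inspection: $F$ carries the constant diagram at $\ast$ to the constant diagram at $\ast$; by the definition of cofibrations in $\overline{S_n^\infty}\calc$ (the same as in $S_n^\infty\calc$), a morphism of restricted $[n]$-complexes is a cofibration precisely when its top row $(f_{0,j})$ satisfies the conditions defining a cofibration in $\calf_{n-1}^\infty\calc$ --- the only extra requirement, the $j=1$ pushout-corner map $A_{0,1}\cup_\ast\ast\to B_{0,1}$, is $f_{0,1}$ itself and hence automatically a cofibration; and once $F$ is known to be an equivalence of quasicategories it preserves pushouts along cofibrations. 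By \cite[Proposition~6.9]{FioreLueck} the entire content then reduces to the claim that $F$ is an equivalence of quasicategories.

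For that I would imitate the proof of Proposition~\ref{prop:colimiting_cocone_equivalence}. Both $\overline{S_n^\infty}\calc$ and $\calf_{n-1}^\infty\calc$ are $0$-full subquasicategories of $\calc^{\overline{N\text{Ar}[n]}}$ and $\calc^{I[n-1]}$ respectively, and $\calc^{\overline{N\text{Ar}[n]}}\to\calc^{I[n-1]}$ is a pseudo fibration (take $Y=\ast$ in \cite[Theorem~5.13]{JoyalQuadern}); I would check that cutting this down to the two subquasicategories leaves $F$ a pseudo fibration --- for inner-horn fillers this is immediate since an inner horn contains every vertex of its simplex, and the lifting of an equivalence $FA\to b'$ amounts to transporting the restricted $[n]$-complex $A$ along a componentwise equivalence by re-forming its subquotients, which is possible because the pushout of an equivalence along a cofibration is an equivalence \cite[Lemma~3.23]{FioreLueck}. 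Then by \cite[Theorem~5.15]{JoyalQuadern} it is enough to show the fibers of $F$ are contractible Kan complexes. The fiber over an object $a=(A_{0,1}\rightarrowtail\cdots\rightarrowtail A_{0,n})$ of $\calf_{n-1}^\infty\calc$ is the quasicategory of all ways of completing $a$ to a restricted $[n]$-complex: one must adjoin the zero object at $(0,0)$, the zero objects $A(i,i)=\ast$ on the diagonal, and the subquotients $A(i,j)$ for $1\le i<j$ together with the $2$-simplices witnessing the pushout squares of condition~(iii) of a restricted $[n]$-complex.

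To see the fiber is contractible I would build this data one cell at a time in an order (roughly: the missing cells listed by increasing first index, then by increasing second index) so that each new cell is either a forced zero object --- whose space of insertions over the already-built diagram is a contractible Kan complex because $\ast$ is initial --- or a cell $A(i{+}1,j{+}1)$ required to be the pushout of the cofibration $A(i,j)\rightarrowtail A(i,j{+}1)$ (a cofibration by condition~(ii), inductively, and the pasting law for pushouts) along $A(i,j)\to A(i{+}1,j)$, the three entries of this span being already present; the space of such insertions is the quasicategory of colimiting cocones of that span, which has a colimit since one leg is a cofibration, hence is a contractible Kan complex by \cite[Proposition~4.6]{JoyalQCatsAndKanComplexes}. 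Arguing as for the map $r$ in the proof of Proposition~\ref{prop:colimiting_cocone_equivalence}, each map in the resulting finite tower is a trivial fibration; since the bottom of the tower is the one-point space $\{a\}$, it follows that $F^{-1}(a)$ is a contractible Kan complex. Therefore $F$ is a pseudo fibration with contractible fibers, hence a trivial fibration (as in Proposition~\ref{prop:colimiting_cocone_equivalence}), hence an equivalence of quasicategories, and the proposition follows.

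The step I expect to be the main obstacle is the contractibility of the fibers $F^{-1}(a)$: organizing the completion of a top row into a restricted $[n]$-complex as a finite tower in which every stage is a \emph{single} pushout along a cofibration (Axiom~(iii) of Definition~\ref{def:Waldhausen_quasicat} provides only pushouts along cofibrations, so the order matters), and then verifying that each restriction map in that tower is genuinely a trivial fibration and not merely a map with contractible fibers. This is precisely the quasicategorical shadow of the classical identification of $S_n\calc$ with $\calf_{n-1}\calc$ by passage to subquotients \cite{WaldhausenAlgKTheoryI}, and it is where Proposition~\ref{prop:colimiting_cocone_equivalence} and \cite[Proposition~4.6]{JoyalQCatsAndKanComplexes} carry the weight. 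A secondary technical point is the claim that restricting the pseudo fibration $\calc^{\overline{N\text{Ar}[n]}}\to\calc^{I[n-1]}$ to $0$-full subquasicategories on both source and target again yields a pseudo fibration, which as indicated above rests on the transport of restricted $[n]$-complexes along componentwise equivalences.
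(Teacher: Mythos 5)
Your proposal is correct in its broad strokes and follows the same blueprint as the paper: factor the forgetful functor as $S_n^\infty\calc \to \overline{S_n^\infty}\calc \to \calf_{n-1}^\infty\calc$, handle the first arrow by Proposition~\ref{prop:Sn_equivalent_to_Snbar}, show the second arrow is an equivalence of quasicategories, and conclude via \cite[Proposition~6.9]{FioreLueck}. The difference lies in how the second arrow is treated. The paper's own proof keeps Proposition~\ref{prop:colimiting_cocone_equivalence} as a black box and applies it \emph{row by row}: forgetting one row at a time is interpreted as a restriction from colimiting cocones over a suitable base diagram (the proof sketches how two consecutive rows, together with an auxiliary constant row and a row of zero objects, organize a row of pushout squares into a single colimiting cocone). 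You instead re-open the proof of Proposition~\ref{prop:colimiting_cocone_equivalence} and run the same machinery \emph{cell by cell}, exhibiting $F$ as a pseudo fibration whose strict fibers are built as a finite tower of trivial fibrations, with each stage inserting either a zero object or a single pushout along a cofibration and invoking \cite[Proposition~4.6]{JoyalQCatsAndKanComplexes}. This buys you a more elementary and self-contained induction at the cost of some extra bookkeeping: you must check, and you rightly flag, that restricting the pseudo fibration $\calc^{\overline{N\mathrm{Ar}[n]}}\to\calc^{I[n-1]}$ to $0$-full subquasicategories on \emph{both} source and target still leaves a pseudo fibration (Proposition~\ref{prop:colimiting_cocone_equivalence} only restricts the source), which in turn requires the transport of a restricted $[n]$-complex along a componentwise equivalence of its top row — an argument you correctly reduce to \cite[Lemma~3.23]{FioreLueck} but which deserves to be spelled out as a lemma rather than a remark; the same point recurs at every stage of your tower. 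One small inaccuracy: since a restricted $[n]$-complex is indexed by $\overline{N\mathrm{Ar}[n]}\subseteq I[n]\times I[n]$, only elementary squares appear, so the pasting law for pushouts you invoke when identifying the cofibration $A(i,j)\rightarrowtail A(i,j{+}1)$ is unnecessary — the inductive claim that pushouts of cofibrations are cofibrations already suffices.
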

\begin{proof}
This forgetful functor factors as the composite $S^\infty_n\calc \to \overline{S^\infty_n}\calc \to \overline{\calf_{n-1}^\infty}\calc$, the first map of which is a Waldhausen equivalence by Proposition~\ref{prop:Sn_equivalent_to_Snbar}. An inductive application of Proposition~\ref{prop:colimiting_cocone_equivalence}, forgetting one row at a time, shows that the second map is also an equivalence. Each of these intermediate functors is exact, reflects cofibrations, and is an equivalence, so is a Waldhausen equivalence by \cite[Proposition~6.9]{FiorePieper}. For instance, to apply Proposition~\ref{prop:colimiting_cocone_equivalence}, we consider row 1 and row 2 of a restricted $[n]$-complex as part of a colimiting cocone consisting of rows 1 and 2, a row of $A_1=A_1= \cdots =A_1$, and a row of $\ast= \ast = \cdots = \ast$, so that we have a row of pushout squares.
\end{proof}

Proposition~\ref{prop:SF_equivalence}, and the earlier results it references, provides some of the details to \cite[Lemma~7.3]{BlumbergGepnerTabuadaI} of Blumberg--Gepner--Tabuada.

The equivalence of quasicategories $S^\infty_n\calc \to \overline{\calf_{n-1}^\infty}\calc$ in Proposition~\ref{prop:SF_equivalence} induces an equivalence of homotopy categories since $ho$ is a 2-functor \cite[Proposition 1.27]{JoyalQuadern}.

\section{Approximation Theorems for Waldhausen Quasicategories} \label{sec:approximation_theorems}

We now turn to the proof of various Approximation Theorems for Waldhausen quasicategories, namely the Pre-Approximation Theorem~\ref{thm:Pre-Approximation}, the Approximation Theorem~\ref{thm:Approximation}, its Corollary~\ref{cor:Approx_hoF_equiv_and_coA=A}, the Cofibration Approximation Theorem~\ref{thm:CofibrationApproximation}, and its Corollaries~\ref{cor:Approximation_when_F_induces_cof_equiv} and \ref{cor:Approximation_when_F_rflcts_cofs_and_hoF_is_equivalence}. We also have a brief interlude in Section~\ref{subsec:App_Comparison} that compares three different approximation axioms: Pre-App 2, App 2, and Cofibration App 2.  .

The central result in this succession of Approximation Theorems is the Pre-Approximation Theorem~\ref{thm:Pre-Approximation}. Its proof uses the quasicategorical version of Quillen's Theorem A recalled in Section~\ref{subsec:Quasicategorical_Thm_A}, but does not require Waldhausen structures. The Pre-Approximation Theorem gives sufficient conditions for a functor to restrict to an equivalence of maximal $\infty$-groupoids.

\subsection{The Pre-Approximation Theorem} \leavevmode \\

We now turn to the main technical result in this paper, which has variants of Waldhausen Approximation among its consequences. The idea goes back to Waldhausen in the classical context of Approximation, namely use a version of Proposition~\ref{prop:extracted_from_Waldhausen} and Quillen's Theorem A. However, the implementation here is different. A key ingredient comes from Schlichting's Appendix \cite[page 132]{Schlichting}, where he proves Approximation for Waldhausen categories, assuming factorizations instead of a cylinder functor. There, Schlichting takes a colimit of a (cofibrant replacement\footnote{He uses his Lemma 13 to construct the cofibrant replacement diagram $Y$ in the proof on page 132.} of a) $\calp$-shaped diagram, and applies the 3-for-2 property to an appropriate commutative triangle. We proceed in this way (without cofibrant replacement) for a similar triangle in diagram \eqref{equ:To_Be_3-Simplex_in_B}.

We do not assume factorization in the domain quasicategory, nor do we assume a cylinder functor, nor even a Waldhausen structure for the Pre-Approximation Theorem~\ref{thm:Pre-Approximation}. For its notation, recall that if $\cala$ is a quasicategory, then $\cala_\text{equiv}$ is its maximal Kan subcomplex, which is the 1-full subquasicategory on the equivalences in $\cala$. Similarly, if $F$ is a functor of quasicategories (i.e. map of the underlying simplicial sets), then $F_\text{equiv}$ is the restriction of $F$ to the maximal Kan subcomplexes. For the proof of Theorem~\ref{thm:Pre-Approximation}, recall also the adjunction between
$\begin{array}{c}
\xymatrix@1{A \ar[d]  \\ A \star (-)}
\end{array}$ and $(A \to -) \backslash X$
in equation \eqref{equ:join_cocone_adjunction}, and the idea of a colimit as an initial object in the $a$-cocone quasicategory $a \backslash X$ in Section~\ref{subsec:colimits_in_a_quasicategory}. We will also make use of the adjunction between
$\begin{array}{c}
\xymatrix@1{B \ar[d]  \\ (-)\star B}
\end{array}$ and $X/(B \to -)$ in equation \eqref{equ:join_cone_adjunction}.

\begin{theorem}[Pre-Approximation] \label{thm:Pre-Approximation}
Let $F\co \cala \to \calb$ be a functor between quasicategories. No Waldhausen structures are assumed.  Suppose:
\begin{enumerate}
\item \label{thm:Pre-Approximation:reflects_equivs}
(Pre-App 1) \\
$F$ reflects equivalences.
\item \label{thm:Pre-Approximation:approx_axiom}
(Pre-App 2) \\
For every $a \in \cala$ and every morphism $Fa \to b$ in the codomain $\calb$, there exists a morphism $f \co a \to a'$ in $\cala$, an equivalence $F(a') \simeq b$ in $\calb$, and a 2-simplex in $\calb$ of the following form.
$$\xymatrix{Fa \ar[rr]^-{\forall} \ar[dr]_{\exists \;\; F(f)} & & b \\  & F(a') \ar[ur]_{\exists \;\;\text{\rm equivalence}} & }$$
\item \label{thm:Pre-Approximation:hoFequiv_esssurj_full}
The functor of groupoids $ho \big(F_\text{\rm equiv}\big)\co ho \big(\cala_\text{\rm equiv}\big) \to ho \big(\calb_\text{\rm equiv}\big)$ is essentially surjective and full. In other words,
$F$ is essentially surjective and $hoF$ is full on isomorphisms in the sense that if $[g]\co Fa \to Fa'$ is an isomorphism in $ho\calb$, then there exists an isomorphism $[f]\co a \to a'$ in $ho\cala$ such that $F[f]=[g]$.
\item \label{thm:Pre-Approximation:finite_posets_of_equivs_have_colimits}
The quasicategory $\cala$ admits colimits of diagrams in $\cala_\mathrm{equiv}$ indexed by connected finite posets, and $F$ preserves such colimits.
\\ (Note that $\cala_\mathrm{equiv}$ itself is not assumed to admit such colimits.)
\end{enumerate}
Then $F_\mathrm{equiv} \co \cala_\mathrm{equiv} \to \calb_\mathrm{equiv}$ is an equivalence of $\infty$-groupoids.\\

If $ho F\co ho\cala \to ho\calb$ is an equivalence of categories, then hypotheses \ref{thm:Pre-Approximation:reflects_equivs}, \ref{thm:Pre-Approximation:approx_axiom}, \ref{thm:Pre-Approximation:hoFequiv_esssurj_full} are satisfied. Thus, if $hoF$ is an equivalence of categories and \ref{thm:Pre-Approximation:finite_posets_of_equivs_have_colimits} holds, then $F_{\text{\rm equiv}}$ is an equivalence of $\infty$-groupoids.
\end{theorem}
\begin{proof}
Suppose the four hypotheses \ref{thm:Pre-Approximation:reflects_equivs}, \ref{thm:Pre-Approximation:approx_axiom}, \ref{thm:Pre-Approximation:hoFequiv_esssurj_full}, \ref{thm:Pre-Approximation:finite_posets_of_equivs_have_colimits} hold.  We show that $F_\text{\rm equiv}$ is a {\it weak} homotopy equivalence using the quasicategorical version of Quillen's Theorem A recalled above in Theorem~\ref{thm:Quasicategorical_Quillen_Theorem_A}. But since the domain and codomain of $F_\text{\rm equiv}$ are cofibrant-fibrant objects in the Kan model structure, this implies $F_\text{\rm equiv}$ is actually even a simplicial homotopy equivalence, and therefore an equivalence of quasicategories.

Let $y$ be a vertex in $\calb_\text{\rm equiv}$. Our goal is to show that $(F_\text{\rm equiv}\downarrow y)$ (see Definition~\ref{def:(f_downarrow_y)}) is weakly contractible using Proposition~\ref{prop:extracted_from_Waldhausen}. The over simplicial set $(F_\text{\rm equiv}\downarrow y)$ is non-empty and connected by hypothesis \ref{thm:Pre-Approximation:hoFequiv_esssurj_full}. Let $\calp$ be a connected finite poset and consider a map of simplicial sets $\Phi\co N\calp \to (F_\text{\rm equiv}\downarrow y)$. By the universal property of the pushout $(F_{\text{equiv}}\downarrow y)$, this corresponds to two maps $p\co N\calp \to (\calb_{\text{equiv}}\downarrow y)$ and $r\co N\calp \to \cala_{\text{equiv}}$ such that $q\circ p=F_\text{\rm equiv} \circ r$ and the two corresponding triangles commute in the left diagram of \eqref{NP_pullback_square}.

\begin{equation} \label{NP_pullback_square}
\begin{array}{c}
\xymatrix@C=3pc@R=3pc{
N\calp \ar@/_1pc/[ddr]_p \ar@/^1pc/[drr]^r
\ar[dr]^-\Phi \\
& (F_\text{\rm equiv} \downarrow y) \ar[d] \ar[r] \ar@{}[dr]|{\text{pullback}}
& \cala_\mathrm{equiv} \ar[d]^{F_\mathrm{equiv}} \\
& (\calb_\text{\rm equiv} \downarrow y) \ar[r]_-q & \calb_\text{\rm equiv} }
\end{array}
\hspace{-.4in}
\begin{array}{c}
\xymatrix@C=3pc@R=3pc{
(N\calp) \star t_1 \ar@/_1pc/@{.>}[ddr]_{\overline{p}} \ar@/^1pc/@{.>}[drr]^{\overline{r}} \\
& (F \downarrow y) \ar[d] \ar[r] \ar@{}[dr]|{}
& \cala \ar[d]^{F} \\
& (\calb \downarrow y) \ar[r]_-q & \calb }
\end{array}
\end{equation}

We show that $\Phi$ extends to $(N\calp)\star t_1$ by extending $p$ and $r$ to $\overline{p}$ and $\overline{r}$ on $(N\calp)\star t_1$ in such a way that $F\circ \overline{r}=q \circ \overline{p}$ as in the right diagram of \eqref{NP_pullback_square}, and then we observe that $\overline{r}$ actually has codomain $\cala_\text{equiv}$, and $\overline{p}$ actually has codomain $(\calb_\text{\rm equiv} \downarrow y)$, so that an extension $\overline{\Phi}\co (N\calp)\star t_1 \to (F_\text{\rm equiv} \downarrow y)$ is defined by the universal property of the pullback.

We use the notation $(N\calp)\star t_1$ where $t_1$ is the terminal simplicial set with sole vertex $t_1$. This is of course isomorphic to $(N\calp)\star 1$, but writing $t_1$ instead of 1 avoids ambiguities when we later use $(N\calp)\star \Delta[1]$. We will write $\Delta[1]$ as $\Delta[t_0,t_1]$, the nerve of the category with a single nontrivial morphism $t_0 \to t_1$. We shall also have occasion to use $\Delta[1] \cong \Delta[t_1, t_2]\cong t_1 \star t_2$ and $\Delta[1] \cong \Delta[t_0, t_2]\cong t_0 \star t_2$, and also $\Delta[2] \cong t_0 \star t_1 \star t_2$.

We extend $r$ to $\overline{r}$ in two steps: first extend $r$ to a universal cocone $\overline{r}'$, and then compose $\overline{r}'$ with a morphism $f$ in $\cala$ obtained from hypothesis \ref{thm:Pre-Approximation:approx_axiom} to form $\overline{r}$. For the first step, since $r$ takes values in $\cala_{\text{equiv}}$, we can form a universal cocone $\overline{r}'\co (N \calp) \star t_0 \to \cala$ by hypothesis \ref{thm:Pre-Approximation:finite_posets_of_equivs_have_colimits}. Its image $F \circ \overline{r}'$ is a universal cocone in $\calb$, also by \ref{thm:Pre-Approximation:finite_posets_of_equivs_have_colimits}.

For the second step of extending $r$ to $\overline{r}$, notice we have another cocone in $\calb$, and this one consists entirely of equivalences: the transpose $p^\dagger\co (N \calp) \star t_2 \to \calb_{\text{equiv}}$ of $p$ via adjunction \eqref{equ:join_cone_adjunction} is a (non-universal) cocone in $\calb_\text{equiv}$ with $p^\dagger(t_2)=y$. The transpose $p^\dagger$ is a cocone for $F_{\text{equiv}}\circ r$ because $p^\dagger\vert_{N\calp}=q \circ p$ by Remark~\ref{rem:remarks_on_over-quasicategory}, which is $F_{\text{equiv}}\circ r$. Since $F \circ \overline{r}'$ is an initial object of $(F\circ r) \backslash \calb$, there is a (homotopically unique) cocone morphism $\alpha\co N\calp \star \Delta[t_0,t_2] \to \calb$ such that
$$\xymatrix@R=3pc@C=3pc{N\calp \ar@{^{(}->}[d] \ar[dr]^{F \circ r} & \\ N\calp \star \Delta[t_0,t_2] \ar[r]_-\alpha & \calb }$$
commutes and
\begin{equation} \label{equ:conditions_on_alpha}
\alpha\vert\; N\calp \star t_0 = F \circ \overline{r}' \hspace{1in} \alpha\vert\; N\calp \star t_2 = p^\dagger.
\end{equation}
Next we apply hypothesis \ref{thm:Pre-Approximation:approx_axiom} to factor $\alpha\vert \;\emptyset \star \Delta[t_0,t_2]$ as
\begin{equation} \label{equ:Factorization_of_Alpha[1]}
\begin{array}{c}
\xymatrix{F\circ \overline{r}'(t_0) \ar[rr]^-{\alpha\vert \;\emptyset \star \Delta[t_0,t_2]} \ar[dr]_{F(f)} & & y \\  & F(a') \ar[ur]_{\text{\rm equivalence}} & }
\end{array}
\end{equation}
in $\calb$ using a morphism $f$ in $\cala$.

The cocone $\overline{r}'\co N\calp \star t_0 \to \cala$ in composes with the morphism $f\co \Delta[t_0,t_1] \to\cala$ to give a new cocone $\overline{r}\co N \calp \star t_1 \to \cala$. Namely, the left vertical map below is mid anodyne by \cite[Theorem~3.17~(i)]{JoyalQuadern}
\begin{equation} \label{equ:composing_a_cocone_with_a_morphism}
\begin{array}{c}
\xymatrix@C=5pc@R=3pc{(N\calp\star t_0)\cup(\emptyset \star \Delta[t_0,t_1]) \ar[r]^-{\overline{r}'\cup f} \ar@{^{(}->}[d]_{\text{mid anodyne}} & \cala \ar[d]^{\text{mid fibration}} \\
N\calp \star \Delta[t_0,t_1] \ar[r] \ar@{.>}[ru] & \ast }
\end{array}
\end{equation}
because $\Lambda^0[1]=\{0\} \to \Delta[1]$ is left anodyne. Here we write $\Delta[t_0,t_1]\cong \Delta[1]$ to signify the nerve of the category $t_0 \to t_1$, as mentioned earlier. The restriction of the diagonal lift to $N\calp \star t_1$ is $\overline{r}:=f\circ \overline{r}'$. This completes the extension of $r$ to the cocone $\overline{r}\co N \calp \star t_1 \to \cala$.

We claim that $\overline{r}\co N\calp \star t_1 \to \cala$ sends every morphism to an equivalence. Let $s$ be an object of $N\calp$ and consider the unique morphisms $u$, $v$, $w$ from $s$ to $t_0$, $t_1$, $t_2$ in $N \calp \star (t_0 \star t_1\star t_2)$, as in the following diagram of three 2-simplices.
\begin{equation} \label{equ:Triangles_in_NP*t123}
\begin{array}{c}
\xymatrix@C=4pc{s \ar[r]_u \ar@/^1pc/[rr]^v \ar[dr]_w & t_0 \ar[r] \ar[d] & t_1 \ar[dl] \\ & t_2 & }
\end{array}
\hspace{.5in} \text{in \;\; $N \calp \star (t_0 \star t_1\star t_2)$}
\end{equation}
We construct a diagram in $\calb$ of 2-simplices in \eqref{equ:To_Be_3-Simplex_in_B} by mapping parts of \eqref{equ:Triangles_in_NP*t123} individually. The composite of $F$ with the filler in \eqref{equ:composing_a_cocone_with_a_morphism} maps the top 2-simplex of \eqref{equ:Triangles_in_NP*t123}
to the top 2-simplex of \eqref{equ:To_Be_3-Simplex_in_B}.
\begin{equation} \label{equ:To_Be_3-Simplex_in_B}
\begin{array}{c}
\xymatrix@C=6.5pc@R=5.5pc{F\circ\overline{r}'(s) \ar[r]_-{F \circ \overline{r}'(u)} \ar@/^2pc/[rr]^{F\circ \overline{r}(v)} \ar[dr]_{\text{equivalence}}^{p^\dagger(w)} & F \circ \overline{r}'(t_0) \ar[r]_{F(f)} \ar[d]^{\alpha\vert \; \emptyset \star \Delta[t_0,t_2]} & F(a') \ar[dl]^{\text{equivalence}} \\ & y & }
\end{array} \hspace{.5in} \text{in \;\; $\calb$}
\end{equation}
The lower left 2-simplex is the $\alpha$-image of the lower left triangle of \eqref{equ:Triangles_in_NP*t123} because of the equalities \eqref{equ:conditions_on_alpha}. The lower right 2-simplex of \eqref{equ:To_Be_3-Simplex_in_B} is the 2-simplex \eqref{equ:Factorization_of_Alpha[1]}. All together, diagram \eqref{equ:To_Be_3-Simplex_in_B} is a $\Lambda^1[3]$-horn in $\calb$, so it extends to a 3-simplex, and in particular, we obtain a 2-simplex with boundary the outermost triangle. Thus, by the 3-for-2 property of equivalences, the top morphism $F\circ \overline{r}(v)$ is an equivalence. By hypothesis \ref{thm:Pre-Approximation:reflects_equivs}, we have that $\overline{r}(v)$ is also an equivalence. We already know that $\overline{r}$ maps all morphisms of $N\calp$ to equivalences, as $\overline{r}\vert_{N\calp}=r$, so $\overline{r}\co N\calp \star t_1 \to \cala$ sends every morphism to an equivalence as claimed.

Next we construct the extension $\overline{p}\co N\calp \star t_1 \to (\calb \downarrow y)$ from our already completed work.
The inclusion $\Lambda^0[2] \hookrightarrow \Delta[2]$ is left anodyne, so
the left vertical map below is mid anodyne by \cite[Theorem~3.17~(iii)]{JoyalQuadern}.
\begin{equation} \label{equ:towards_pbar1}
\begin{array}{c}
\xymatrix@C=5pc@R=3pc{\big(N\calp \star \Lambda^0[2]\big) \; \bigcup \; \big(\emptyset \star \Delta[2]\big) \ar[r]^-{\eqref{equ:towards_pbar2}} \ar@{^{(}->}[d]_{\text{mid anodyne}} & \calb \ar[d]^{\text{mid fibration}} \\
N\calp \star (t_0 \star t_1 \star t_2) \ar[r] \ar@{.>}[ru] & \ast }
\end{array}
\end{equation}
The top horizontal morphism in \eqref{equ:towards_pbar1} is the following map, which is like \eqref{equ:To_Be_3-Simplex_in_B} with $s$ replaced by $N\calp$.
\begin{equation} \label{equ:towards_pbar2}
\begin{array}{c}
\xymatrix{\big(N\calp \star \Lambda^0[2]\big) \; \bigcup \; \big(\emptyset \star \Delta[2]\big) \ar@{=}[d] \\
\big( N\calp \star (t_0\star t_1 \cup t_0 \star t_2)\big)\; \bigcup \; (t_0\star t_1 \star t_2) \hspace{.65in} \ar@{=}[d] \\
 N\calp \star (t_0\star t_1) \; \bigcup \; N\calp \star (t_0\star t_2) \; \bigcup \; (t_0\star t_1 \star t_2) \hspace{1.1in} \ar[d]_-{\eqref{equ:composing_a_cocone_with_a_morphism} \; \bigcup \; \alpha \; \bigcup \; \eqref{equ:Factorization_of_Alpha[1]}} \\ \calb}
\end{array}
\end{equation}
The second and third objects are equal by the compatibility of $N\calp\star -$ with union by \cite[Lemma~3.8]{JoyalQuadern}. A diagonal lift in \eqref{equ:towards_pbar1} exists, and we denote its restriction to $N\calp \star (t_1 \star t_2)$ by $$\xymatrix{\overline{p}^\dagger\co N\calp \star (t_1 \star t_2) \ar[r] & \calb}.$$
The restriction is the analogue of the outermost triangle (i.e. the composite triangle) of \eqref{equ:To_Be_3-Simplex_in_B}. In fact, $\overline{p}^\dagger$ goes into $\calb_\text{equiv}$ by the comments after \eqref{equ:To_Be_3-Simplex_in_B}. The transpose of $\overline{p}^\dagger$ is the desired map
$$\xymatrix{\overline{p}\co N\calp \star t_1 \ar[r] & (y \downarrow \calb_\text{equiv})}.$$ Hence we have constructed the extension $\overline{\Phi}\co (N\calp)\star t_1 \to (F_\text{\rm equiv} \downarrow y)$ by the universal property of the pullback, so $(F_\text{\rm equiv} \downarrow y)$ is contractible by Proposition~\ref{prop:extracted_from_Waldhausen}, and $F_\text{\rm equiv}$ is a weak homotopy equivalence by the quasicategorical version of Quillen's Theorem A, recalled above in Theorem~\ref{thm:Quasicategorical_Quillen_Theorem_A}.

Suppose now that $ho F\co ho\cala \to ho\calb$ is an equivalence of categories. We show that hypotheses \ref{thm:Pre-Approximation:reflects_equivs}, \ref{thm:Pre-Approximation:approx_axiom},  \ref{thm:Pre-Approximation:hoFequiv_esssurj_full} are satisfied. For \ref{thm:Pre-Approximation:reflects_equivs}, $F$ reflects equivalences because $ho F$ reflects isomorphisms. To construct the factorization in \ref{thm:Pre-Approximation:approx_axiom} of given $h \co Fa \to b$, we use essential surjectivity of $ho F$ to find an equivalence $g\co Fa' \to b$, then we use the fully faithfulness of $ho F$ to find a homotopy class $[f]$ in $\cala$ with
$$[F(f)]=[g]^{-1}\circ [h].$$ Finally, $[g]\circ[F(f)]=[h]$, so a 2-simplex as in \ref{thm:Pre-Approximation:approx_axiom} exists.  For \ref{thm:Pre-Approximation:hoFequiv_esssurj_full}, $ho\big(F_\text{equiv}\big)$ is essentially surjective because $ho(F)$ is, and $ho\big(F_\text{equiv}\big)$ is full because $ho(F)$ is full and reflects equivalences ($ho(F)$ is even fully faithful).
\end{proof}

\begin{remark} \label{rem:Pre-App2+initialobj_implies_essentially_surjective}
Pre-App 2, stated as hypothesis~\ref{thm:Pre-Approximation:approx_axiom} of Theorem~\ref{thm:Pre-Approximation}, implies that $F$ is essentially surjective when there is an initial object in the image of $F$. This is the case for an exact functor between Waldhausen quasicategories.
\end{remark}

\begin{remark} \label{rem:illustration_of_colimit_hypothesis}
The following example illustrates that the colimit condition stated in hypothesis~\ref{thm:Pre-Approximation:finite_posets_of_equivs_have_colimits} of Theorem~\ref{thm:Pre-Approximation} is not an empty condition. Consider the connected finite poset
$$\cali=\left\{ \begin{array}{c} \xymatrix{\bullet \ar[r] \ar[d] & \bullet  & \bullet \ar[l] \ar[d] \\ \bullet & & \bullet \\ \bullet \ar[r] \ar[u] & \bullet  & \bullet \ar[l] \ar[u] } \end{array} \right\}$$
and the constant diagram $\ast$ indexed by $\cali$ in the quasicategory of Kan complexes. This diagram is a diagram of equivalences, in fact a diagram of equalities of the selected terminal Kan complex $\ast$.
Then the {\it colimit} of this constant diagram in this quasicategory is the {\it homotopy colimit}
$\text{hocolim}_\cali\, \ast$ in the Kan enriched category of Kan complexes \cite[Theorem 4.2.4.1]{LurieHigherToposTheory}. By Bousfield-Kan  \cite{BousfieldKanBook}, $\text{hocolim}_\cali\, \ast$ has the homotopy type of $N\cali\simeq S^1 \not\simeq \ast$. Thus, the colimit condition \ref{thm:Pre-Approximation:finite_posets_of_equivs_have_colimits} on a diagram of equivalences in a {\it quasicategory} is genuinely different from the analogous condition on a diagram of isomorphisms in a {\it category}. In a category, a diagram of isomorphisms indexed by a connected finite poset always has a colimit: simply select any object in the diagram, and take as a colimiting cocone the paths to that object.
\end{remark}

\subsection{The Approximation Theorem} \leavevmode \\

In this section we prove the Approximation Theorem~\ref{thm:Approximation} using the Pre-Approximation Theorem~\ref{thm:Pre-Approximation} and Lemma~\ref{lem:ImageOf_hoF}. A comparison with other recent Approximation Theorems in the literature is in the subsequent Section~\ref{subsec:ComparisonWithOtherAuthors}.

A familiar tool in work on Approximation is to consider the coproduct $x \vee y$ and make a factorization as in \eqref{equ:factorization_of_sum}, and I thank Georgios Raptis for suggesting the following Lemma. Recall that if $x$ and $y$ are objects in a Waldhausen quasicategory $\cala$, then we denote any pushout of $x \leftarrow 0 \rightarrow y$ by $x \vee y$, and indicate any morphism induced by the universal property as $f_1 + f_2\co x \vee y \to z$. In particular, if $F$ is an exact functor, then $F(x \vee y)$ is a $F(x) \vee F(y)$ and we can speak of morphism sums out of $F(x \vee y)$, without making any identifications.

\begin{lemma}[Condition for a Homotopy Class to be in Image of $hoF$ or $ho F_{\text{\rm equiv}}$] \label{lem:ImageOf_hoF}
Let $F\co \cala \to \calb$ be an exact functor of Waldhausen quasicategories and suppose $F$ reflects equivalences. Let $x$ and $y$ be objects in $\cala$.  Suppose $g\co Fx \to Fy $ is a morphism in $\calb$ and suppose the map
$$\xymatrix{g + F(\text{\rm Id}_y) \co F(x\vee y) \ar[r] & F(y)}$$
factors as $Ff$ followed by an equivalence $w$.
\begin{equation} \label{equ:factorization_of_sum}
\xymatrix{F(x\vee y) \ar[r]^-{Ff} \ar@/_1.7pc/[rr]_{g + F(\text{\rm Id}_y)} & F(a') \ar[r]^-w_\simeq & F(y)}
\end{equation}
\begin{enumerate}
\item
Then $[g]$ is in the image of $ho(F)$. More precisely, there exists a homotopy class $[\overline{g}]\co x \to y$ in $ho\cala$ such that $\big(hoF([\overline{g}])\big)=[g]$.
\item
Moreover, if $g$ is an equivalence, then $[g]$ is in the image of $ho\big(F_{\text{\rm equiv}}\big)$.
\end{enumerate}
\end{lemma}
\begin{proof} \leavevmode
\begin{enumerate}
\item
The two triangles
$$\xymatrix{ & F(x) \ar[dl]_{F(i_x)} \ar[dr]^g & \\
F(x\vee y) \ar[rr]^-{g + F(\text{\rm Id}_y)} & & F(y) \\
& F(y) \ar[ul]^{F(i_y)} \ar[ur]_{F(\text{Id}_y)=\text{Id}_{Fy}} & }$$
commute, so we factor the middle map according to \eqref{equ:factorization_of_sum}, and form the dotted composites below, to have 4 commutative triangles.
$$\xymatrix@C=3pc@R=3pc{ & F(x) \ar[dl]_{F(i_x)} \ar[dr]^g \ar@{.>}[d]|{F(f i_x)} & \\
F(x\vee y) \ar[r]^-{Ff} & F(a') \ar[r]^-w_\simeq & F(y) \\
& F(y) \ar[ul]^{F(i_y)} \ar[ur]_{F(\text{Id}_y)=\text{Id}_{Fy}} \ar@{.>}[u]|{F(f i_y)} & }$$
By 3-for-2 in the bottom right triangle, $F(f i_y)$ is an equivalence, so $f i_y$ is an equivalence, as $F$ reflects equivalences. From the two right triangles we now have $F((f i_y)^{-1} \circ (f i_x))$ is homotopic to $g$, so we may take $\overline{g}$ to be any composite $(f i_y)^{-1} \circ (f i_x)$.
\item
If $g$ is an equivalence, then $\overline{g}$ is also an equivalence, as $F$ reflects equivalences, so $[g]$ is in the image of $ho F_{\text{\rm equiv}}$.
\end{enumerate}
\end{proof}

\begin{proposition}
If $F$ satisfies App 1 and App 2, then both $hoF$ and $ho \big( F_{\text{\rm equiv}}\big)$ are full.
\end{proposition}
\begin{proof}
This follows directly from Lemma~\ref{lem:ImageOf_hoF}.
\end{proof}

\begin{theorem}[Waldhausen Approximation for Quasicategories] \label{thm:Approximation}
Let $F\co \cala \to \calb$ be an exact functor between Waldhausen quasicategories. Suppose:
\begin{enumerate}
\item \label{thm:Approximation:App1}
(App 1) \\
$F$ reflects equivalences.
\item \label{thm:Approximation:App2}
(App 2) \\
For every $a \in \cala$ and every morphism $Fa \to b$ in the codomain $\calb$, there exists a cofibration $f \co a \rightarrowtail a'$ in $\cala$, an equivalence $F(a') \simeq b$ in $\calb$, and a 2-simplex in $\calb$ of the following form.
$$\xymatrix{Fa \ar[rr]^-{\forall} \ar@{>->}[dr]_{\exists \;\; F(f)} & & b \\  & F(a') \ar[ur]_{\exists \;\;\text{\rm equivalence}} & }$$
\item \label{thm:Approximation:finite_colimits}
The domain quasicategory $\cala$ admits all finite colimits, and $F$ preserves them (both are true for instance if all morphisms of the domain quasicategory $\cala$ are cofibrations).
\end{enumerate}
Then $F_{\text{\rm equiv}}$ is an equivalence of $\infty$-groupoids, as is $\big(S_n^\infty F\big)_{\text{\rm equiv}}$ for every $n \geq 0$. Consequently $\bfK(F)$ is a level-wise equivalence of spectra.
\end{theorem}
\begin{proof}
We combine the Pre-Approximation Theorem~\ref{thm:Pre-Approximation} with Lemma~\ref{lem:ImageOf_hoF}. For \ref{thm:Pre-Approximation:reflects_equivs} and \ref{thm:Pre-Approximation:approx_axiom} of the Pre-Approximation Theorem, Pre-App 1 and Pre-App 2 hold by assumption, as Pre-App 1 = App 1, and App 2 implies Pre-App 2. For \ref{thm:Pre-Approximation:hoFequiv_esssurj_full}, the functor $ho\big(F_\text{equiv}\big)$ is essentially surjective by Remark~\ref{rem:Pre-App2+initialobj_implies_essentially_surjective}, and is full by Lemma~\ref{lem:ImageOf_hoF}.
The special finite colimits required in \ref{thm:Pre-Approximation:finite_posets_of_equivs_have_colimits} of the Pre-Approximation Theorem exist in $\cala$ and are preserved by $F$ by hypothesis.


The same argument applies to $S_n^\infty F$ to conclude $\big(S_n^\infty F\big)_{\text{\rm equiv}}$ is an equivalence of $\infty$-groupoids, since $S_nF$ satisfies App 1 and App 2 when $F$ does by \cite[Lemma~1.6.6]{WaldhausenAlgKTheoryI}, see Proposition~\ref{prop:Cof_Apps_F_Imply_Cof_Apps_SnF} for the translation to quasicategories. Also, the quasicategory $S_n\cala$ admits finite colimits when $\cala$ does, and $S_nF$ preserves finite colimits when $F$ does.

Since $\big(S_n^\infty F \big)_\text{equiv}$ is a weak homotopy equivalence of simplicial sets for all $n \geq0$, the Realization Lemma for bisimplicial sets implies $\bfK_1(F)$ is weak homotopy equivalence of spaces.

The $K$-theory spaces recalled in Definition \ref{def:nth-K-theory-space} form an $\Omega$-spectrum beyond the 0-th term. Since $\bfK_1(F)$ is a weak homotopy equivalence of spaces, the map $\Omega \bfK_1(F)$ is also a weak homotopy equivalence of spaces (recall $\pi_n \Omega X \cong \pi_{n+1} X$ naturally), so 3-for-2 and the commutative diagram
\begin{equation} \label{equ:level-wise_equivalence_of_K-theory_spectra}
\begin{array}{c}
\xymatrix@R=3pc@C=3pc{\Omega \bfK_1(\cala) \ar[r]^{\mathrm{w.h.e.}} \ar[d]_{\Omega \bfK_1(F)}^{\mathrm{w.h.e.}} & \bfK_2(\cala) \ar[d]^{\bfK(F)_2} \\ \Omega \bfK_1(\calb) \ar[r]_{\mathrm{w.h.e.}} & \bfK_2(\calb) }
\end{array}
\end{equation}
imply that $\bfK_2(F)$ is a weak homotopy equivalence. Continuing in this way, every $\bfK_n(F)$ is a weak homotopy equivalence for $n \geq 1$.
\end{proof}

\begin{remark}
In the Approximation Theorem~\ref{thm:Approximation}, hypothesis \ref{thm:Approximation:finite_colimits} could be weakened to the existence and preservation of the special colimits in hypothesis \ref{thm:Pre-Approximation:finite_posets_of_equivs_have_colimits} of the Pre-Approximation Theorem. In the Approximation Theorem we require finite colimits rather than merely the special ones because the statement is easier to formulate and check in practice.
\end{remark}

\begin{remark} \label{rem:App2_implies_all_cofs}
Waldhausen's original condition App 2, namely that every morphism $F(a) \to b$ factors as $(\text{equiv}) \circ F(\text{cof})$, implies {\it in the quasicategorical context} that every morphism in the codomain is a cofibration. To see this, suppose $g\co b \to b'$ is any morphism in $\calb$. By essential surjectivity, there is an equivalence $F(a) \simeq b$, and by App 2 every morphism $F(a) \to b'$ is a cofibration, so any composite
$$\xymatrix{Fa \ar[r]^-{\simeq} \ar@/_1pc/@{>->}[rr] & b \ar[r]^g & b'}$$

\noindent is a cofibration.
Passing to the homotopy category and inverting the resulting isomorphism expresses $[g]=[\text{cof}] \circ (\text{iso})$, so $g$ is a cofibration. Since Waldhausen's App 2 implies every codomain morphism is a cofibration in the quasicategorical setting, it is natural to search for an Approximation Theorem with a weaker version of App 2: the factorization {\it only of cofibrations} $F(a) \rightarrowtail b$ as $(\text{equiv}) \circ F(\text{cof})$. This is the topic of Sections~\ref{subsec:App_Comparison} and \ref{subsec:TheCofibrationApproximationTheorem}.
\end{remark}

\begin{corollary}[Approximation when $hoF$ is an Equivalence of Homotopy Categories and $co\cala=\cala$] \label{cor:Approx_hoF_equiv_and_coA=A}
Let $F\co \cala \to \calb$ be an exact functor between Waldhausen quasicategories. Suppose $F$ induces an equivalence of homotopy categories, and suppose every morphism of $\cala$ is a cofibration. Then $F_{\text{\rm equiv}}$ is an equivalence of $\infty$-groupoids, as is $\big(S_n^\infty F\big)_{\text{\rm equiv}}$ for every $n \geq 0$. Consequently $\bfK(F)$ is a level-wise equivalence of spectra.
\end{corollary}
\begin{proof}
Since $F$ induces an equivalence of homotopy categories, $F$ reflects equivalences, so satisfies App 1. For App 2, suppose $g \co Fa \to b$ is a morphism in $\calb$. Then by the essential surjectivity of $hoF$, we have an equivalence $w\co F(a') \to b$.  The fullness of $hoF$ provides a pre-image homotopy class for $[w]^{-1} \circ [g]$, and any representative completes the required triangle in App 2 (recall every morphism of $\cala$ is a cofibration). Since every morphism of $\cala$ is a cofibration, $\cala$ admits pushouts and initial objects, so admits all finite colimits. The functor $F$ preserves pushouts and initial objects, so also preserves all finite colimits. Theorem~\ref{thm:Approximation} now applies.
\end{proof}

\begin{remark} \label{rem:exact_functor-equiv_on_hocats-domain_all_cofibs|implies|codom_all_cofibs}
In the situation of Corollary~\ref{cor:Approx_hoF_equiv_and_coA=A}, every morphism $g$ of the codomain is also a cofibration, since the equivalence of homotopy categories guarantees for each $g$ in the codomain the existence of some cofibration $f$ in the domain and some equivalences $v$ and $w$ in the codomain such that $[g]=[w][Ff][v]$.

Also notice, in Corollary~\ref{cor:Approx_hoF_equiv_and_coA=A}, the assumption that $co\cala=\cala$ is equivalent to the assumption that $\cala$ admits factorization, see Section~\ref{subsec:Review_of_WaldhausenQCats}.
\end{remark}

\begin{remark}
In Corollary~\ref{cor:Approx_hoF_equiv_and_coA=A}, if we drop the hypothesis $co\cala=\cala$ we can still conclude $F_\text{equiv}$ is an equivalence of $\infty$-groupoids by the Pre-Approximation Theorem~\ref{thm:Pre-Approximation}, but we cannot conclude anything about $\big(S_n^\infty F\big)_{\text{\rm equiv}}$. Properties Pre-App 1 and 2 for $F$ do not persist to $S_n^\infty F$ in general.
\end{remark}

\subsection{Comparison with Approximation Theorems in Classical Setting, Simplicially Enriched Setting, and Quasicategorical Setting} \label{subsec:ComparisonWithOtherAuthors} \leavevmode \\

There are several related results in the recent literature. First we recall the results in the {\it classical categorical setting}, some of which relate App 1 and App 2 to an equivalence of homotopy categories and simplicially enriched categories. For an ordinary 1-category $\cala$ with weak equivalences, the {\it homotopy category } $Ho(\cala)$ in these statements means the category obtained by formally inverting the weak equivalences. Some of these statements are slight reformulations of the original statements.

\bigskip
\noindent {\bf Theorem 1.6.7 of Waldhausen in \cite{WaldhausenAlgKTheoryI}}. {\it Let $\cala$ and $\calb$ be Waldhausen categories. Suppose the weak equivalences in both $\cala$ and $\calb$ satisfy the 3-for-2 property. Suppose further that $\cala$ has a cylinder functor and the weak equivalences in $\cala$ satisfy the cylinder axiom. Let $F\colon \cala \to \calb$ be an exact functor. Suppose $F$ satisfies App 1 and App 2. Then the induced maps $wF\colon w\cala \to w\calb$ and $wS_\bullet F \colon wS_\bullet \cala \to wS_\bullet \calb$ are weak homotopy equivalences.\footnote{In this cited Theorem 1.6.7 of \cite{WaldhausenAlgKTheoryI}, Waldhausen writes ``homotopy equivalence'' instead of ``weak homotopy equivalence'' to indicate that the geometric realizations of $wF$ and $wS_\bullet F$ are homotopy equivalences between CW-complexes.} }
\bigskip

\bigskip
\noindent {\bf Theorem 10 of Schlichting in \cite{Schlichting}}. {\it Let $\cala$ and $\calb$ be classical
Waldhausen categories. Suppose the weak equivalences in $\cala$ and $\calb$ both have the 3-for-2 property. Suppose every morphism in $\cala$ factors as a cofibration followed by a weak equivalence. Let $F \co \cala \to \calb$ be an exact functor
satisfying App 1 and App 2. Then the induced maps $wF\colon w\cala \to w\calb$ and $wS_\bullet F \colon wS_\bullet \cala \to wS_\bullet \calb$ are weak homotopy equivalences.}
\bigskip

\bigskip
\noindent {\bf Theorem 2.9 of Cisinski in \cite{CisinskiInvarianceOfK-Theory}}. {\it Let $\cala$ and $\calb$ be Waldhausen categories that are ``derivable'' in the sense that their weak equivalences satisfy the 3-for-2 property and every one of their morphisms can be factored as a cofibration followed by a weak equivalence. Suppose also that $\cala$ and $\calb$ are each ``strongly saturated'' in the sense that any morphism is a weak equivalence if and only if its image in $Ho(\cala)$ respectively $Ho(\calb)$ is an isomorphism. Let $F\co \cala \to \calb$ be a right exact functor. Then the following conditions are equivalent.
\begin{enumerate}
\item
For every finite ordered set $E$, the induced functor $Ho(\cala^E) \to Ho(\calb^E)$ is an equivalence of categories.
\item
The induced functor $Ho(\cala) \to Ho(\calb)$ is an equivalence of categories.
\item
The functor $F$ satisfies App 1 and App 2.
\end{enumerate} }
\bigskip

\bigskip
\noindent {\bf Theorem 3.25 of Cisinski in \cite{CisinskiInvarianceOfK-Theory}}. {\it Let $F\co \cala \to \calb$ be a left exact functor between categories of fibrant objects. If the induced functor $Ho(\cala) \to Ho(\calb)$ is an equivalence of categories, then $F$ induces an equivalence of simplicially enriched categories
$$\xymatrix{L^H(\cala) \ar[r] & L^H(\calb).}$$
Here $L^H$ is the hammock localization of Dywer and Kan.}
\bigskip

\bigskip
\noindent {\bf Theorem 1.5 of Blumberg-Mandell in \cite{BlumbergMandellAbstHomTheory}}. {\it Let $\cala$ be a classical Waldhausen category in which the weak equivalences have the 3-for-2 property and in which every morphism factors as a cofibration followed by a weak equivalence. Let $\calb$ be a classical Waldhausen category in which the weak equivalences have the 3-for-2 property. Let $F \co \cala \to \calb$ be an exact functor. If $F$ satisfies App 1 and App 2, then $Ho(\cala) \to Ho(\calb)$ is an equivalence of categories, and $Ho(a/\cala) \to Ho(Fa/\calb)$ is an equivalence of categories for all objects $a$ of $\cala$. Here $a/\cala$ and $Fa/\calb$ mean the categories under $a$ and $Fa$. }
\bigskip

\bigskip
\noindent {\bf Theorem 1.4 of Blumberg-Mandell in \cite{BlumbergMandellAbstHomTheory}}. {\it Let $\cala$ and $\calb$ be classical Waldhausen categories, both of which have the 3-for-2 property for their weak equivalences. Suppose further that every morphism in both categories factors as a cofibration followed by a weak equivalence.  Let $F\co \cala \to \calb$ be a functor that preserves weak equivalences. Then $F$ induces a Dwyer-Kan equivalence if and
only if $F$ induces an equivalence $Ho(\cala)\to Ho(\calb)$ and an equivalence $Ho(a/\cala) \to Ho(Fa/\calb)$ for all objects $a$ of $\cala$.}
\bigskip

Theorems 1.5 and 1.4 of Blumberg-Mandell together imply: if $F$ is an exact functor between classical Waldhausen categories that both satisfy 3-for-2 and factorization, and $F$ satisfies App 1 and App 2, then $F$ induces a Dwyer-Kan equivalence of hammock localizations.

\bigskip
\noindent {\bf Theorem 1.3 of Blumberg-Mandell in \cite{BlumbergMandellAbstHomTheory}}. {\it Let $\cala$ and $\calb$ be classical Waldhausen categories, both of which have the 3-for-2 property for their weak equivalences. Suppose further that every morphism in both categories factors as a cofibration followed by a weak equivalence.  Let $F\co \cala \to \calb$ be a a weakly exact functor that induces an equivalence on homotopy categories. If
any one of the following additional hypotheses holds
\begin{enumerate}
\item
the weak equivalences of $\cala$ and $\calb$ are closed under retracts,
\item
a morphism $f$ in $\cala$ is a weak equivalence if and only the morphism $Ff$ in $\calb$ is a weak equivalence, or
\item
for any objects $a,a$ in $\cala$, the image of $Ho(w\cala)(a,a')$ in $Ho\calb(Fa,Fa')$ coincides with the image
of $Ho(w\calb)(Fa,Fa')$,
\end{enumerate}
then $F$ induces an equivalence of $K$-theory spectra.}
\bigskip

Barwick has proved Corollary~\ref{cor:Approx_hoF_equiv_and_coA=A}, but not Theorem~\ref{thm:Approximation}. His Corollary~8.2.2 of \cite{Barwick} implies: if $G\co \cala \to \calb$ is an exact functor between Waldhausen quasicategories, {\it both of which have all maps cofibrations}, and $G$ induces an equivalence $ho\cala \to ho\calb$ of homotopy categories, then $G$ induces a stable equivalence of $K$-theory spectra. Another version of Corollary~\ref{cor:Approx_hoF_equiv_and_coA=A} was proved by Blumberg--Gepner--Tabuada for {\it stable} quasicategories: their  Corollary~5.11 of \cite{BlumbergGepnerTabuadaI} states that a map of stable quasicategories is an equivalence if and only if it induces an equivalence of homotopy categories. See present Corollaries~\ref{cor:Approximation_when_F_induces_cof_equiv} and \ref{cor:Approximation_when_F_rflcts_cofs_and_hoF_is_equivalence} for further quasicategorical settings in which an equivalence of cofibration homotopy categories or an equivalence of homotopy categories guarantees a levelwise equivalence of $K$-theory spectra.

In light of Remark~\ref{rem:exact_functor-equiv_on_hocats-domain_all_cofibs|implies|codom_all_cofibs}, we can point out two predecessors to Corollary~\ref{cor:Approx_hoF_equiv_and_coA=A} in the setting of classical Waldhausen categories. Blumberg-Mandell prove in \cite[Theorem 1.3 (ii)]{BlumbergMandellAbstHomTheory}: if $\calc$ and $\cald$ are both classical Waldhausen categories which admit factorization and whose classes of weak equivalences both have the 3-for-2 property, then any {\it weakly} exact functor $G \co \calc \to \cald$ that reflects weak equivalences and induces an equivalence of homotopy categories induces a stable equivalence of $K$-theory spectra. Cisinski also made this same conclusion in \cite{CisinskiInvarianceOfK-Theory} for a right exact functor $G \co \calc \to \cald$ for which each Waldhausen category $\calc$ and $\cald$ is a category of cofibrant objects, has a null object, and satisfies saturation conditions.

\subsection{The Cofibration Approximation Axiom 2 and its Comparison with App 2 and Pre-App 2} \label{subsec:App_Comparison} \leavevmode \\

Since Waldhausen's condition App 2 (in the quasicategorical setting) implies every morphism of the codomain is a cofibration  by Remark~\ref{rem:App2_implies_all_cofs}, we would like a more general version the Approximation Theorem~\ref{thm:Approximation} without every codomain morphism a cofibration. So, we replace the hypothesis App 2 by a weaker condition called {\it Cofibration App 2} that requires only factorization of cofibrations $F(a) \rightarrowtail b$ and prove the Cofibration Approximation Theorem~\ref{thm:CofibrationApproximation} in the next section. We now introduce Cofibration App 2, compare it with App 2 and Pre-App 2 in Remark~\ref{rem:AppsRelationships}, consider invariance of all three properties under composition in Proposition~\ref{prop:compositions_satisfy_App2}~\ref{prop:compositions_satisfy_App2:hyp:FE} and \ref{prop:compositions_satisfy_App2:hyp:ho(coG)}, invariance of Cofibration App 2 under natural equivalence in Proposition~\ref{prop:compositions_satisfy_App2}~\ref{prop:compositions_satisfy_App2:hyp:natural_equivalence}, the persistence  of Cofibration App 2 to $S_n^\infty F$ and $\overline{\mathcal{F}_{n-1}^\infty}(F)$ in Proposition~\ref{prop:Cof_Apps_F_Imply_Cof_Apps_SnF}, and the persistence of Pre-App 2 in Proposition~\ref{prop:Pre-Apps_for_F_and_coA=A_imply_Pre-Apps_For_SnF}.

\begin{definition} \label{def:Cof_Apps}
An exact functor $F\co \cala \to \calb$ between Waldhausen quasicategories has the {\it Cofibration Approximation Property} if its restriction $coF\co co\cala \to co\calb$ to the cofibration subquasicategories satisfies Pre-App 1 and Pre-App 2 of Theorem~\ref{thm:Pre-Approximation}. More specifically the following two conditions on $F$ are required to hold. \\ \\
{\it Cofibration App 1. } The exact functor $F$ reflects equivalences.  \\ \\
{\it Cofibration App 2. } For every $a \in \cala$ and every {\it cofibration} $g\co Fa \rightarrowtail b$ in the codomain $\calb$, there exists a {\it cofibration} $f \co a \rightarrowtail a'$ in $\cala$, an equivalence $F(a') \simeq b$ in $\calb$, and a 2-simplex in $\calb$ of the following form.
$$\xymatrix{Fa \ar@{>->}[rr]^-{\forall \text{ cofibration} \; g} \ar@{>->}[dr]_{\exists \;\; F(f)} & & b \\  & F(a') \ar[ur]_{\exists \;\;\text{\rm equivalence}} & }$$
\end{definition}

\begin{remark}[Relationship between Pre-App, App, and Cofibration App] \label{rem:AppsRelationships} \leavevmode
\begin{enumerate}
\item
Clearly, App 2 implies both Pre-App 2 and Cofibration App 2.
\item \label{rem:AppsRelationships:domain_all_cofibrations_IMPLIES_Pre-App2=App2}
If every morphism of the domain is a cofibration, then conditions Pre-App 2 and App 2 coincide. If $F$ satisfies either, then $F$ also satisfies Cofibration App 2.
\item
If every morphism of the codomain is a cofibration, then conditions App 2 and Cofibration App 2 coincide. If $F$ satisfies either, then $F$ also satisfies Pre-App 2.
\item
If every morphism of both the domain and the codomain is a cofibration then conditions Pre-App 2, App 2, and Cofibration App 2 coincide.
\item
In any case, the equivalence-reflection conditions Pre-App 1, App 1, and Cofibration App 1 are all the same condition.
\end{enumerate}
\end{remark}

\begin{proposition} \label{prop:compositions_satisfy_App2}
Let $E$, $F$, and $G$ be exact functors between Waldhausen quasicategories as below.
$$\xymatrix{\underline{\cala} \ar[r]^E & \cala \ar[r]^F & \calb \ar[r]^G & \underline{\calb}}$$
Suppose $F$ satisfies Cofibration App 2.
\begin{enumerate}
\item \label{prop:compositions_satisfy_App2:hyp:FE}
If $E$ is essentially surjective, then $F\circ E$ also satisfies Cofibration App 2.
\item \label{prop:compositions_satisfy_App2:hyp:ho(coG)}
If $ho(coG)$ is both full and essentially surjective, then $G\circ F$ also satisfies Cofibration App 2.
\item \label{prop:compositions_satisfy_App2:hyp:natural_equivalence}
If an exact functor $\underline{F}$ is naturally equivalent to $F$, then $\underline{F}$ also satisfies Cofibration App 2.
\end{enumerate}
Similar statements hold if ``Cofibration App 2'' is replaced throughout by ``Pre-App 2'' and in \ref{prop:compositions_satisfy_App2:hyp:ho(coG)} the category $ho(coG)$ is replaced by $ho(G)$. \\
Similar statements also hold if ``Cofibration App 2'' is replaced throughout by ``App 2'' and in \ref{prop:compositions_satisfy_App2:hyp:ho(coG)} the category $ho(coG)$ is replaced by $ho(G)$.
\end{proposition}
\begin{proof} \leavevmode
\begin{enumerate}
\item
Suppose $E$ is essentially surjective.
Let $g\co FE\underline{a} \rightarrowtail b$ be a cofibration in $\calb$.  We factor $g$ as $F(E\underline{a}) \overset{Ff}{\rightarrowtail} F(a') \overset{w}{\to} b$. We select any equivalence $k\co a' \to E (\underline{a}')$, and obtain the desired factorization as
$$\xymatrix@C=4pc{(FE)\underline{a}\, \ar@{>->}[r]^-{F(k \circ f)} \ar@{>->}@/_1.5pc/[rr]_-g & (FE)(\underline{a}') \ar[r]^-{w \,\circ \,(Fk^{-1})}_{\simeq} & b}.$$
To find the filling 2-simplex, we work with homotopy classes in the homotopy category to arrive at a commutative triangle, and then revert back to representatives. A commutative triangle in the homotopy category always comes from a 2-simplex between any selected representatives.
\item
Suppose $ho(coG)$ is both full and essentially surjective. Let $\underline{g} \co GFa \rightarrowtail \underline{b}$ be a cofibration in $\underline{\calb}$ and select any equivalence $k\co \underline{b} \to Gb$. Let $g$ be a cofibration in $\calb$ such that $Gg=k \circ \underline{g}$. We factor $g$ as $g=w \circ Ff$ and obtain the desired factorization as
$$\xymatrix@C=4pc{(GF)a\, \ar@{>->}[r]^-{(GF)f} \ar@{>->}@/_1.5pc/[rr]_-{\underline{g}} & (GF)(a') \ar[r]^-{k^{-1} \,\circ \,Gw}_{\simeq} & \underline{b}}.$$
\item
Suppose $\underline{F}$ is exact and $\underline{F}\simeq F$, and let $g\co \underline{F}a \rightarrowtail b$ be a cofibration in $\calb$. Then we precompose $g$ with $Fa \simeq \underline{F}a$, factor the resulting morphism as $w \circ Ff$, and then use the naturality square to factor $g$ as $\underline{w} \circ \underline{F}f$ with the same $f$.
\end{enumerate}
\end{proof}

\begin{proposition}[Cof Apps for $F$ $\Rightarrow$ Cof Apps for $S_n^\infty F$ and $\overline{\mathcal{F}_{n-1}^\infty}(F)$] \label{prop:Cof_Apps_F_Imply_Cof_Apps_SnF}
If $F\co \cala \to \calb$ is an exact functor that satisfies Cofibration App 1 and 2 in Definition~\ref{def:Cof_Apps},
then both $S_n^\infty F$ and $\overline{\mathcal{F}_{n-1}^\infty}(F)$ are exact functors that satisfy Cofibration App 1 and 2.
\end{proposition}
\begin{proof}
It suffices to check that $\overline{\mathcal{F}_{n-1}^\infty}(F)$ satisfies Cofibration App 1 and 2 because the forgetful functor $S_n^\infty \calc \to \overline{\mathcal{F}_{n-1}^\infty}(\calc)$ is a Waldhausen equivalence by Proposition~\ref{prop:SF_equivalence}. Namely, conjugation of $\overline{\mathcal{F}_{n-1}^\infty}(F)$ by this natural Waldhausen equivalence, shows that $S_n^\infty F$ satisfies Cofibration App 1 when $\overline{\mathcal{F}_{n-1}^\infty}(F)$ does. An application of all three parts of Proposition~\ref{prop:compositions_satisfy_App2} to the conjugation shows that $S_n^\infty F$ satisfies Cofibration App 2 when $\overline{\mathcal{F}_{n-1}^\infty}(F)$ does.

For simplicity of notation we use $n$ instead of $n-1$. Recall that $I[n]$ is the union of the standard 1-simplices $N\{i<i+1\}$ for $0\leq i \leq n-1$, and is called the {\it spine} of $\Delta[n]$. The objects of $\overline{\mathcal{F}_{n}^\infty}(\cala)$ are functors $A\co I[n] \to co \cala$, which we abbreviate as $(A_i)_{i=0}^n$. An object of $\overline{\mathcal{F}_{n}^\infty}(\cala)$ is a sequence of cofibrations, head-to-tail, {\it without composites and without quotients}.

If $F$ satisfies Cofibration App 1, then it is clear that $\overline{\mathcal{F}_{n}^\infty}(F)$ also satisfies Cofibration App 1, because natural equivalences are precisely the natural transformations that are level-wise equivalences by \cite[Theorem 5.14]{JoyalQuadern}.

Suppose $F$ satisfies Cofibration App 2. We show that $\overline{\mathcal{F}_{n}^\infty}(F)$ satisfies Cofibration App 2. This proof is similar to Lemma 1.6.6 of Waldhausen \cite{WaldhausenAlgKTheoryI} concerning the analogous statement for App 2. We only need to justify the quasicategorical translation and check that the relevant maps are cofibrations.
Let $g \co \overline{\mathcal{F}_{n}^\infty}(F)(A)\rightarrowtail B$ be a cofibration in $\overline{\mathcal{F}_{n}^\infty}(\calb)$, and suppose for an inductive proof that $\overline{\mathcal{F}_{n-1}^\infty}(F)$ satisfies Cofibration App 2. Then our task is construct a factorization $g_n=w_n \circ F f_n$ which together with the already constructed $(w_i)_{i=0}^{n-1}$ and $(f_i)_{i=0}^{n-1}$ forms a desired factorization in $\overline{\mathcal{F}_{n}^\infty}(\calb)$. We first form the pushout $P_1$ on the left below. The middle diagram is the $F$-image of the pushout square, and the last commutative square of $g$. The universal property of the pushout $FP_1$ induces a contractible space of morphisms $FP_1\to B_n$, we select one and call it $g_n'$. In the third diagram, the outer bottom square is the bottom square of the middle diagram, while the inner bottom square is a pushout, and therefore there is an induced selected morphism $p_2$.

$$\xymatrix@C=2.8pc@R=2.8pc{
A_{n-1} \ar@{}[dr]|{\text{p.o.}} \ar@{>->}[r]^-{a_n}  \ar@{>->}[d]|{f_{n-1}} & A_n \ar@{>->}[d]|{p_1} &
FA_{n-1} \ar@{}[dr]|{F(\text{p.o.})} \ar@{>->}[r]^-{Fa_n} \ar@{>->}[d]|{Ff_{n-1}} \ar@/_1.7pc/@{>->}[dd]_{g_{n-1}} & FA_n \ar@{>->}[d]|{Fp_1} \ar@/^1.7pc/@{>->}[dd]^{g_{n}}  &
\hspace{.25in}FA_{n-1} \ar@{}[dr]|{F(\text{p.o.})} \ar@{>->}[r]^-{Fa_n} \ar@{>->}[d]|{Ff_{n-1}} \ar@/_1.7pc/@{>->}[dd]_{g_{n-1}} & FA_n \ar@{>->}[d]|{Fp_1} \ar@/^1.7pc/@{>->}[dddr]^{g_n}  & \\
A'_{n-1} \ar@{>->}[r]_-{\overline{a}_n} &  P_1 &
FA'_{n-1} \ar@{>->}[r] \ar@{>->}[d]|{w_{n-1}} &  FP_1 \ar@{>.>}[d]|{g_n'} &
\hspace{.25in}FA'_{n-1} \ar@{>->}[r] \ar@{>->}[d]|{w_{n-1}} \ar@{}[dr]|{\text{p.o.}} &  FP_1 \ar@{>->}[d]|{\simeq} \ar@/^1pc/@{>->}[ddr]^(.3){g_n'} & \\
& &
B_{n-1} \ar@{>->}[r]_-{b_n} & B_n &
\hspace{.25in}B_{n-1} \ar@{>->}[r] \ar@/_1pc/@{>->}[drr]_-{b_n} & P_2 \ar@{.>}[dr]^{p_2} & \\
& & & & & & B_n}$$

But $p_2$ is a cofibration: the two pushout squares in the right diagram compose to make a pushout square, $g$ is a cofibration in $\overline{\mathcal{F}_{n}^\infty}(\calb)$, and $p_2$ is also the induced map for the outermost square involving $g_{n-1}$ and $g_n$. We have now expressed $g_n'$ as a composite of two cofibrations, so $g_n'$ is also a cofibration.

We can now apply Cofibration App 2 to $g_n'\co FP_1 \rightarrowtail B_n$ and factor it as
$$\xymatrix@C=3pc{FP_1 \ar@{>->}[r]^-{Ff_n'} \ar@{>->}@/_1.5pc/[rr]_-{g_n'} & FA_{n}' \ar[r]^-{w_n} & B_n}$$
($A_n'$ is defined this way). We define $f_n:=f_n' \circ p_1$ and have
$$g_n=g_n' \circ Fp_1= \big(w_n \circ Ff_n' \big)\circ Fp_1=w_n \circ F(f_n' \circ p_1) = w_n \circ F f_n.$$
As remarked before, the previous sequence of equalities is actually performed in the homotopy category, and then we revert back to our selected representatives and a 2-simplex for $g_n = w_n \circ F f_n$ exists by elementary quasicategory theory.

We define the cofibration $a_n' \co A_{n-1}' \rightarrowtail A_n'$ to be $a_n':=f_n'\circ \overline{a}_n$, which is a cofibration because both $f_n'$ and $\overline{a}_n$ are.

Any map $P_1 \to A_n'$ induced by $a_n' \circ f_{n-1} = f_n \circ a_n$ is homotopic to $f_n'$  (by virtue of $a_n'$ and $f_n$ being defined as composites with second map $f_n'$), so $P_1 \to A_n'$ is a cofibration as $f_n'$ is. Thus $f=(f_i)_{i=0}^n$ is a cofibration in $\overline{\mathcal{F}_{n}^\infty}(\cala)$.
\end{proof}

\begin{remark} \label{rem:Assuming_PreApp2_Instead}
We would like an analogous statement to Proposition~\ref{prop:Cof_Apps_F_Imply_Cof_Apps_SnF} for Pre-Apps, but we will need to assume that every morphism of the domain is a cofibration. Why? Notice that the proof used in Proposition~\ref{prop:Cof_Apps_F_Imply_Cof_Apps_SnF} for Cofibration App 2 works fine for Pre-App 2 without further assumption,\footnote{After removal of the word ``cofibration'' in relevant passages.} {\it until the penultimate paragraph where $a_n'$ is proved to be a cofibration}. Assuming only Pre-App 2 instead of Cofibration App 2, we can still define $a_n':=f_n'\circ \overline{a}_n$, however it is not necessarily a cofibration, as $f_n'$ is not necessarily a cofibration. Additionally assuming that every morphism of $\cala$ is a cofibration of course guarantees that $a_n'$ is a cofibration as needed, in order for $(A_i')_{i=0}^n$ to be an element of $\overline{\mathcal{F}_{n}^\infty}(\cala)$.
\end{remark}

\begin{proposition}[Pre-Apps for $F$ and $co\cala=\cala$ $\Rightarrow$ Pre-Apps for $S_n^\infty F$] \label{prop:Pre-Apps_for_F_and_coA=A_imply_Pre-Apps_For_SnF}
Let $F\co \cala \to \calb$ be an exact functor and suppose that every morphism of $\cala$ is a cofibration. If $F$ satisfies Pre-App 1 and Pre-App 2 in Theorem~\ref{thm:Pre-Approximation}, then $S_n^\infty F$ is also an exact functor that satisfies Pre-App 1 and Pre-App 2.
\end{proposition}
\begin{proof}
Since every morphism of $\cala$ is a cofibration, every morphism of $S_n^\infty \cala$ and $\overline{\mathcal{F}_{n-1}^\infty}(F)$ is also a cofibration. Then the conditions Pre-App 2 and App 2 for $F$ coincide, as do the conditions for $S_n^\infty F$ and $\overline{\mathcal{F}_{n-1}^\infty}(F)$ (separately), by Remark~\ref{rem:AppsRelationships}~\ref{rem:AppsRelationships:domain_all_cofibrations_IMPLIES_Pre-App2=App2}.

We reduce consideration of $S_n^\infty F$ to $\overline{\mathcal{F}_{n-1}^\infty}(F)$ via Proposition~\ref{prop:compositions_satisfy_App2} for Pre-App 2 or App 2.

Waldhausen's argument of Lemma 1.6.6 in \cite{WaldhausenAlgKTheoryI} then applies, see the proof of Proposition~\ref{prop:Cof_Apps_F_Imply_Cof_Apps_SnF} and remove the word ``cofibration'' as appropriate, as stated in Remark~\ref{rem:Assuming_PreApp2_Instead}.
\end{proof}

\subsection{The Cofibration Approximation Theorem and Other Variants} \label{subsec:TheCofibrationApproximationTheorem} \leavevmode \\

We next prove a version of Approximation with Cofibration App 2 in place of App 2, and obtain various other statements as corollaries. We do not require a cylinder functor.

\begin{theorem}[Cofibration Approximation] \label{thm:CofibrationApproximation}
Let $F\co \cala \to \calb$ be an exact functor between Waldhausen quasicategories. Suppose:
\begin{enumerate}
\item \label{thm:CofibrationApproximation:F_reflects_equivs}
(Cofibration App 1) \\
$F$ reflects equivalences.
\item \label{thm:CofibrationApproximation:cofibration_approx_axiom}
(Cofibration App 2) \\
For every $a \in \cala$ and every cofibration $g\co Fa \rightarrowtail b$ in the codomain $\calb$, there exists a cofibration $f \co a \rightarrowtail a'$ in $\cala$, an equivalence $F(a') \simeq b$ in $\calb$, and a 2-simplex in $\calb$ of the following form.
$$\xymatrix{Fa \ar@{>->}[rr]^-{\forall \text{ cofibration} \; g} \ar@{>->}[dr]_{\exists \;\; F(f)} & & b \\  & F(a') \ar[ur]_{\exists \;\;\text{\rm equivalence}} & }$$
\item \label{thm:CofibrationApproximation:hoF_full_on_isos}
The functor of groupoids $ho \big(F_\text{\rm equiv}\big)\co ho \big(\cala_\text{\rm equiv}\big) \to ho \big(\calb_\text{\rm equiv}\big)$ is full, that is, if $[g]\co Fa \to Fa'$ is an isomorphism in $ho\calb$, then there exists an isomorphism $[f]\co a \to a'$ in $ho\cala$ such that $F[f]=[g]$.
\item \label{thm:CofibrationApproximation:hoFnF_full_on_isos}
$ho\big(\big(\overline{\calf^\infty_n}F\big)_\text{\rm equiv}\big)$ is also full for all $n \geq 1$.
\end{enumerate}
Then $F_{\text{\rm equiv}}$ is an equivalence of $\infty$-groupoids, as is $\big(S_n^\infty F\big)_{\text{\rm equiv}}$ for every $n \geq 0$. Consequently $\bfK(F)$ is a level-wise equivalence of spectra. \\ \\
Hypothesis \ref{thm:CofibrationApproximation:hoFnF_full_on_isos} holds for instance if $ho \big(F_\text{\rm equiv}\big)$ is fully faithful and $ho\big(\big(\overline{\calf^\infty_1} F\big)_\text{\rm equiv} \big)$ is full. See Remark~\ref{rem:sufficient_condition_for_hoF1Fequiv}.
\end{theorem}
\begin{proof}
Notice that the restriction to cofibration subquasicategories $coF\co co\cala \to co \calb$ satisfies Pre-App 1 and Pre-App 2 there because $F$ satisfies Cofibration App 1 and Cofibration App 2. We first prove that $F_{\text{\rm equiv}}$ is an equivalence of $\infty$-groupoids by applying the Pre-Approximation Theorem~\ref{thm:Pre-Approximation} to the restriction $coF$ and using the fact that $F_\text{equiv}=(coF)_\text{equiv}$. Verification of the hypotheses of Pre-Approximation Theorem~\ref{thm:Pre-Approximation} for $coF$ are as follows.
\begin{enumerate}
\item
Pre-App 1 for $coF$ is Cofibration App 1 for $F$, so holds.
\item
Pre-App 2 for $coF$ is Cofibration App 2 for $F$, so holds.
\item
The functor $ho \big((coF)_\text{\rm equiv}\big)$ is essentially surjective by Remark~\ref{rem:Pre-App2+initialobj_implies_essentially_surjective}. It is also full because $ho\big(F_\text{\rm equiv}\big)$ is full by hypothesis (recall also that every equivalence is a cofibration).
\item
The quasicategory $co\cala$ has an initial object and all pushouts, so admits all finite colimits. The functor $coF$ preserves zero objects, so also initial objects (every initial object is a zero object), and $coF$ preserves pushouts, so $coF$ preserves all finite colimits.
\end{enumerate}
Thus, by the Pre-Approximation Theorem~\ref{thm:Pre-Approximation}, the functor $(coF)_\text{equiv}\co \cala_\text{equiv} \to \calb_\text{equiv}$ is an equivalence of $\infty$-groupoids, but $(coF)_\text{equiv}=F_\text{equiv}$, so $F_\text{equiv}$ is an equivalence of $\infty$-groupoids.

We similarly prove $\big(S_n^\infty F\big)_{\text{\rm equiv}}$ is an equivalence of $\infty$-groupoids using the Pre-Approximation Theorem~\ref{thm:Pre-Approximation}. By Proposition~\ref{prop:Cof_Apps_F_Imply_Cof_Apps_SnF}, the exact functor $S_n^\infty F$ inherits Cofibration App 1 and 2 from $F$, so assumptions \ref{thm:Pre-Approximation:reflects_equivs} and \ref{thm:Pre-Approximation:approx_axiom} of Theorem~\ref{thm:Pre-Approximation} hold for $co\big(S_n^\infty F\big)$. The essential surjectivity of $ho\big( co\big(S_n^\infty F \big)_\text{equiv} \big)$ in hypothesis \ref{thm:Pre-Approximation:hoFequiv_esssurj_full} and the special colimits in hypothesis \ref{thm:Pre-Approximation:finite_posets_of_equivs_have_colimits} are also inherited from $coF$. What remains to show is the functor $ho\big(co\big(S_n^\infty F\big)_\text{equiv}\big)=ho\big(S_n^\infty F\big)_\text{equiv}$ is full in \ref{thm:Pre-Approximation:hoFequiv_esssurj_full}. But this follows from present hypothesis \ref{thm:CofibrationApproximation:hoFnF_full_on_isos} on $ho\big(\big(\overline{\calf^\infty_{n-1}}F)_\text{equiv}\big)\big)$ by Proposition~\ref{prop:SF_equivalence}, so $\big(S_n^\infty F\big)_{\text{\rm equiv}}$ is an equivalence of $\infty$-groupoids for all $n\geq2$ by the Pre-Approximation Theorem. For $n=1$ and $n=0$, the functor $\big(S_1^\infty F\big)_{\text{\rm equiv}}$ is basically $F_{\text{equiv}}$, so also an equivalence of $\infty$-groupoids, while $\big(S_0^\infty F\big)_{\text{\rm equiv}}$ is an equivalence between terminal quasicategories.

From the equivalences of $\infty$-groupoids $F_\text{equiv}$ and $\big(S_n^\infty F\big)_{\text{\rm equiv}}$, the spectrum map $\bfK(F)$ is a levelwise equivalence of spectra by the same argument as in the Approximation Theorem proof using diagram \eqref{equ:level-wise_equivalence_of_K-theory_spectra}. This completes the proof of the theorem. Next we prove the final claim about a sufficient condition for hypothesis \ref{thm:CofibrationApproximation:hoFnF_full_on_isos} to hold.

It remains to prove that hypothesis \ref{thm:CofibrationApproximation:hoFnF_full_on_isos} holds if $ho \big(F_\text{\rm equiv}\big)$ is fully faithful and $ho\big(\big(\overline{\calf^\infty_1}F\big)_\text{equiv}\big)$ is full.
To illustrate the argument it suffices to prove the claim for $n=2$. Suppose we have an equivalence $w\co\overline{\calf^\infty_2} F(A_i)_{i=0}^2 \simeq \overline{\calf^\infty_2} F(A_i')_{i=0}^2$, and consider its two commutative squares, separately. Then by the fullness of $ho\big(\big(\overline{\calf^\infty_1} F\big)_\text{equiv}\big)$, we can find two commutative squares in $\cala$
\begin{equation} \label{equ:squares_of_w}
\begin{array}{c}
\xymatrix@C=2.5pc@R=2.5pc{A_0 \ar@{>->}[r]^{c_1} \ar[d]^\simeq_{u_0} \ar[dr] & A_1 \ar[d]_\simeq^{u_1} \\ A_0' \ar@{>->}[r]_{c_1'} & A_1' }
\end{array} \hspace{1in}
\begin{array}{c}
\xymatrix@C=2.5pc@R=2.5pc{A_1 \ar@{>->}[r]^{c_2} \ar[d]^\simeq_{v_1} \ar[dr] & A_2 \ar[d]_\simeq^{v_2} \\ A_1' \ar@{>->}[r]_{c_2'} & A_2' }
\end{array}
\end{equation}
whose images are homotopic in $ho\big(\big(\overline{\calf^\infty_1}\calb\big)_\text{equiv}\big)$ to the two given squares of $w$, separately.

We would like to simultaneously alter the second square in \eqref{equ:squares_of_w} and create a homotopy between it and its alteration. Consider the prism $\Delta[1] \times \Delta[2] \to \cala$ which is the right square of \eqref{equ:squares_of_w} in the 1-face and 2-face, and is the identity on $c_2'$ in the 0-face (its base). This is the trivial ``left homotopy'' from the right square of \eqref{equ:squares_of_w} to itself. In this prism, replace the initial triangle at 0 by any 2-simplex of $\cala$ that is a left homotopy\footnote{Recall $v_1$ and $u_1$ are homotopic by the fully faithfulness of $ho(F_\text{equiv})$, since their $F$-images are both homotopic to $w_1$.} from $v_1$ to $u_1$, remove the lower 2-simplex of the prism face that now has $u_1$, and remove the (interior) 3-simplex to which this face belongs. This creates a $\Lambda^1[3]$ horn in $\cala$, which we fill, to now have a new prism $\Delta[1] \times \Delta[2] \to \cala$. This new prism is a homotopy between the right square of \eqref{equ:squares_of_w} and the following square with $u_1$ as the left vertical map, and a different lower 2-simplex.
\begin{equation} \label{equ:new_square}
\begin{array}{c}
\xymatrix@C=2.5pc@R=2.5pc{A_1 \ar@{>->}[r]^{c_2} \ar[d]^\simeq_{u_1} \ar[dr] & A_2 \ar[d]_\simeq^{v_2} \\ A_1' \ar@{>->}[r]_{c_2'} & A_2' }
\end{array}
\end{equation}

Next we glue \eqref{equ:new_square} to the left square of \eqref{equ:squares_of_w} and obtain an equivalence in $\overline{\calf_2^\infty}(\cala)$ whose $F$-image is homotopic to the equivalence $w$ from the outset. This completes the conclusion of fullness of $ho\big(\big(\overline{\calf_2^\infty} F \big)_\text{equiv}\big)$ in hypothesis \ref{thm:CofibrationApproximation:hoFnF_full_on_isos} from the indicated condition. The argument for fullness of $ho\big(\big(\overline{\calf_n^\infty} F \big)_\text{equiv}\big)$ for $n > 2$ is similar: one successively alters and pastes squares as above, one after the other.
\end{proof}

\begin{remark}[Sufficient Conditions for Fullness of $ho\big(\big(\overline{\calf^\infty_1} F\big)_\text{\rm equiv} \big)$] \label{rem:sufficient_condition_for_hoF1Fequiv}
In the final sentence of Theorem~\ref{thm:CofibrationApproximation}, a two-part sufficient condition was given for hypothesis \ref{thm:CofibrationApproximation:hoFnF_full_on_isos} to hold, the latter part of this sufficient condition was the fullness of $ho\big(\big(\overline{\calf^\infty_1} F\big)_\text{\rm equiv} \big)$. Suppose:
\begin{enumerate}
\item
$F_\text{equiv}$ is full, i.e., $F_\text{equiv}$ has the right lifting property with respect to the inclusion $\partial \Delta[1] \hookrightarrow \Delta[1]$, and
\item
$F$ has the right lifting property with respect to $\partial \Delta[2] \hookrightarrow \Delta[2]$.
\end{enumerate}
Then $\big(\overline{\calf^\infty_1} F\big)_\text{\rm equiv}$ is full, and $ho\big(\big(\overline{\calf^\infty_1} F\big)_\text{\rm equiv}\big)$ is full.
\end{remark}

Instead of requiring $F$ to induce an equivalence of homotopy categories as in Corollary~\ref{cor:Approx_hoF_equiv_and_coA=A}, it is actually sufficient to require only an equivalence of the cofibration homotopy categories, provided $ho((\overline{\calf^\infty} F)_\text{equiv})$ is full.

\begin{corollary}[Approximation, when $F$ induces an equivalence of {\it cofibration} homotopy categories] \label{cor:Approximation_when_F_induces_cof_equiv}
Let $F\co \cala \to \calb$ be an exact functor between Waldhausen quasicategories. Suppose:
\begin{enumerate}
\item
$F$ induces an equivalence of cofibration homotopy categories, i.e. the functor
$$ho(co F)\co ho(co\cala) \to ho(co\calb)$$
is an equivalence of categories.
\item \label{cor:Approximation_when_F_induces_cof_equiv:ii}
$ho\big(\big(\overline{\calf^\infty_1} F\big)_\text{\rm equiv} \big)$ is full.
\end{enumerate}
Then $F_{\text{\rm equiv}}$ is an equivalence of $\infty$-groupoids, as is $\big(S_n^\infty F\big)_{\text{\rm equiv}}$ for every $n \geq 0$. Consequently $\bfK(F)$ is a level-wise equivalence of spectra.
\end{corollary}
\begin{proof}
This proof is a direct application of the Cofibration Approximation Theorem~\ref{thm:CofibrationApproximation}. Hypotheses \ref{thm:CofibrationApproximation:F_reflects_equivs}, \ref{thm:CofibrationApproximation:cofibration_approx_axiom}, and \ref{thm:CofibrationApproximation:hoF_full_on_isos} of the Cofibration Approximation Theorem all follow from the assumption that $ho(coF)$ is an equivalence of cofibration homotopy categories.

For hypothesis \ref{thm:CofibrationApproximation:hoFnF_full_on_isos}, we use the final claim of the Cofibration Approximation Theorem about a sufficient condition for \ref{thm:CofibrationApproximation:hoFnF_full_on_isos} to hold. The functor $ho\big(F_\text{equiv})$ is fully faithful because $ho(coF)$ is an equivalence of categories. The fullness of $ho\big(\big(\overline{\calf^\infty_1} F\big)_\text{\rm equiv} \big)$ is assumed.

\end{proof}

\begin{corollary} \label{cor:cofibration_inclusion}
Let $\calc$ be Waldhausen quasicategory. Then $co\calc \hookrightarrow \calc$ induces a level-wise equivalence in $K$-theory.
\end{corollary}
\begin{proof}
This follows directly from Corollary~\ref{cor:Approximation_when_F_induces_cof_equiv}. Hypothesis~\ref{cor:Approximation_when_F_induces_cof_equiv:ii} follows because every equivalence of $\calc$ is a cofibration, and because a natural transformation is a natural equivalence if and only if each component is an equivalence by \cite[Theorem~5.14]{JoyalQuadern}.
\end{proof}

\begin{lemma} \label{lem:reflection_of_cofibrations_vs_equiv_of_cofibration_homotopy_categories}
Let $F\co \cala \to \calb$ be an exact functor between Waldhausen quasicategories such that $hoF$ is an equivalence of categories. Then the functor $F$ reflects cofibrations if and only if it induces an equivalence of cofibration homotopy categories, that is, $F$ reflects cofibrations if and only if
$$ho \left( co F \right) \co ho \left(  co \cala \right) \to ho \left(co \calb \right)$$ is an equivalence of categories.
\end{lemma}
\begin{proof}
Suppose $hoF$ is an equivalence of categories. Since $ho F$ is faithful and $ho\left(co \cala\right)$ is naturally a subcategory of $ho\cala$ that maps to $ho\left(co\calb\right)$, the exact functor $F$ induces a faithful functor of homotopy {\it cofibration} categories. Since $ho F$ is essentially surjective, and equivalences are cofibrations,  and essentially surjective functor $ho (co F) \co ho (co \cala) \to ho (co \calb)$ is also essentially surjective.

Then $F$ reflects cofibrations if and only if $ho (co F)$ is full (recall that any map homotopic to a cofibration is a cofibration). Consequently $F$ reflects cofibrations if and only if $ho (co F)$ is an equivalence of categories.
\end{proof}

\begin{corollary}[Approximation, when $F$ reflects cofibrations and induces equivalence of homotopy categories] \label{cor:Approximation_when_F_rflcts_cofs_and_hoF_is_equivalence}
Let $F\co \cala \to \calb$ be an exact functor between Waldhausen quasicategories. Suppose:
\begin{enumerate}
\item
$F$ induces an equivalence of homotopy categories
$$hoF\co ho\cala \to ho\calb.$$
\item
$ho\big(\big(\overline{\calf^\infty_1} F\big)_\text{\rm equiv} \big)$ is full.
\item
$F$ reflects cofibrations.
\end{enumerate}
Then $F_{\text{\rm equiv}}$ is an equivalence of $\infty$-groupoids, as is $\big(S_n^\infty F\big)_{\text{\rm equiv}}$ for every $n \geq 0$. Consequently $\bfK(F)$ is a level-wise equivalence of spectra.
\end{corollary}
\begin{proof}
By Lemma~\ref{lem:reflection_of_cofibrations_vs_equiv_of_cofibration_homotopy_categories}, the functor $ho(co F)$ is an equivalence of categories, so the result follows from Corollary~\ref{cor:Approximation_when_F_induces_cof_equiv}.
\end{proof}

\begin{remark} \label{rem:BlumbergMandell_DKequiv_of_weak_cof_cats}
In the classical context, a related result to Corollaries~\ref{cor:Approximation_when_F_induces_cof_equiv} and \ref{cor:Approximation_when_F_rflcts_cofs_and_hoF_is_equivalence} is the following. Let $\cala$ and $\calb$ be classical Waldhausen categories both of which have the 3-for-2 property for weak equivalences, and both of which admit functorial mapping cylinders for weak cofibrations (FMCWC) in the sense of \cite[Definition~2.6]{BlumbergMandellAbstHomTheory}. If $F \co \cala \to \calb$ is an exact functor which induces an equivalence of homotopy categories and induces a Dwyer-Kan equivalence of DK-localizations of subcategories of weak cofibrations, then it follows from \cite[Theorem~2.7]{BlumbergMandellAbstHomTheory} of Blumberg--Mandell that $F$ induces a stable equivalence in $K$-theory.
\end{remark}

\begin{remark}
In Corollary~\ref{cor:Approximation_when_F_rflcts_cofs_and_hoF_is_equivalence} we made no assumption that $F$ satisfies App 1 and App 2, but instead required reflection of cofibrations and an equivalence of homotopy categories.\footnote{When all morphisms in the domain are cofibrations, any exact functor that induces an equivalence of homotopy categories indeed does satisfy App 1 and App 2, see Corollary~\ref{cor:Approx_hoF_equiv_and_coA=A}. But in Corollary~\ref{cor:Approximation_when_F_rflcts_cofs_and_hoF_is_equivalence} we did not assume every morphism of the domain is a cofibration, so we made no assumption about App 1 and App 2 in Corollary~\ref{cor:Approximation_when_F_rflcts_cofs_and_hoF_is_equivalence}.} In the classical setting, under certain hypotheses, these two sets of assumptions are equivalent. Namely, Blumberg--Mandell prove in \cite[Theorems 1.5 and 9.1]{BlumbergMandellAbstHomTheory} that if $\cala$ and $\calb$ are both classical Waldhausen categories whose weak equivalences have the 3-for-2 property and $\cala$ admits factorization, then any exact functor $F\co \cala \to \calb$ that satisfies App 1 and App 2 induces an equivalence of homotopy categories. There are several related results in the present paper in the setting of quasicategories: App 2 implies $hoF$ is essentially surjective (see Remark~\ref{rem:Pre-App2+initialobj_implies_essentially_surjective}), App 1 and App 2 together imply $hoF$ is full (see Lemma~\ref{lem:ImageOf_hoF}), and if the domain Waldhausen quasicategory admits factorization, then App 1 and App 2 imply $F_\text{equiv}$ is an equivalence of $\infty$-groupoids (see the Approximation Theorem~\ref{thm:Approximation}).
\end{remark}

\begin{remark}
A different variant of Approximation for an exact functor $F \co \cala \to \calb$ betweeen classical Waldhausen categories was proved by Sagave in \cite[Theorem 2.8]{SagaveSpecialApproximation}. Instead of requiring Waldhausen's App 2 for all maps $Fa \to b$ in the Waldhausen category $\calb$, he requires App 2 only for maps with codomain $b$ in a full subcategory of {\it special objects}. The category $\calb$ is equipped with a functorial replacement of any object by a special one. If $\cala$ and $\calb$ are both classical Waldhausen categories whose weak equivalences have the 3-for-2 property and $\cala$ admits factorization, then any exact functor $F\co \cala \to \calb$ that satisfies special approximation and reflects weak equivalences induces an equivalence of algebraic $K$-theory spectra. An example of such a functor arises when $\cala$ and $\calb$ are full subcategories of pointed model categories, $\cala$ and $\calb$ both have full subcategories of special objects with functorial replacement, and $HoF\co Ho \cala \to Ho \calb$ is an equivalence, see \cite[Section 3.2]{SagaveSpecialApproximation}.
\end{remark}

\section{Appendix: Quasicategorical Recollections and a Criterion for a Simplicial Set to be Weakly Contractible} \label{sec:recollections}

Boardman and Vogt \cite{BoardmanVogt} originally defined the concept of quasicategory under the name {\it weak Kan complex}. Joyal \cite{JoyalQuadern} and Lurie \cite{LurieHigherToposTheory}, \cite{LurieHigherAlgebra} have extensively developed the theory of quasicategories. We rapidly review quasicategories, their homotopy categories, the join and slice of quasicategories, and colimits in a quasicategory in Sections~\ref{subsec:quasicategories_and_functors}--\ref{subsec:colimits_in_a_quasicategory}. The main reference for Sections~\ref{subsec:quasicategories_and_functors}--\ref{subsec:colimits_in_a_quasicategory} is Joyal's Barcelona notes \cite{JoyalQuadern} on quasicategories. Over quasicategories and the quasicategorical Quillen Theorem~A are recalled in Section~\ref{subsec:Quasicategorical_Thm_A}. In Section~\ref{subsec:barycentric_subdivision_and_Ex} we quickly review barycentric subdivision of simplicial sets and Kan's functor $\text{Ex}$ to prepare for a weak contractibility criterion in Section~\ref{subsec:criterion_for_weak_contractibility}. This criterion in Proposition~\ref{prop:extracted_from_Waldhausen} is an important ingredient for the central result of the present paper, the Pre-Approximation Theorem~\ref{thm:Pre-Approximation}, so we prove every detail. The argument is an adaptation of Schlichting's \cite[Lemma 14 on page 131]{Schlichting}, which he extracted from Waldhausen \cite[pages 354--356]{WaldhausenAlgKTheoryI}.

\subsection{Quasicategories and Functors} \label{subsec:quasicategories_and_functors} \leavevmode \\

A {\it quasicategory} is a simplicial set $X$ in which every inner horn admits a filler. That is, for any $0<k<n$ and any map $\Lambda^k[n] \to X$, there exists a map $\Delta[n] \dashrightarrow X$ such that the diagram
$$\xymatrix{\Lambda^k[n] \ar[r] \ar@{^{(}->}[d] & X  \\ \Delta[n] \ar@{-->}[ur]_{\exists} & }$$
commutes. The objects and morphisms of a quasicategory are, respectively, its vertices and edges. A {\it functor between quasicategories} is merely a map of simplicial sets. The quasicategory of functors from a simplicial set $A$ to a quasicategory $X$ is the usual internal hom simplicial set $X^A$. The 1-simplices of $X^A$ are the {\it natural transformations}, that is, simplicial set maps $\alpha\co A \times \Delta[1] \to X$. A functor $F \co X \to Y$ between quasicategories is {\it fully faithful} if $F(a,b)\co X(a,b) \to Y(Fa,Fb)$ is a weak homotopy equivalence, while $F$ is {\it essentially surjective} if $\tau_1F$ is essentially surjective. Here $X(a,b)$ is the mapping space recalled below, and $\tau_1\co \mathbf{SSet} \to \mathbf{Cat}$ is the left adjoint to the nerve functor $N \co \mathbf{Cat} \to \mathbf{SSet}$. A functor $F$ between quasicategories is essentially surjective if and only if every object in its codomain is equivalent to one in its image. A functor between quasicategories is called an {\it equivalence} if it is an equivalence in the 2-category $\mathbf{SSet}^{\tau_1}$ which has simplicial sets as its objects and $\mathbf{SSet}^{\tau_1}(A,B):=\tau_1(B^A)$ as its hom categories. A functor between quasicategories is an equivalence if and only if it is fully faithful and essentially surjective \cite[Propositions~3.19 and 3.20]{FiorePieper}.

A sub simplicial set $A$ of a quasicategory $X$ is called {\it full} or {\it 0-full} if any simplex of $X$ is in $A$ if and only if all of its vertices are in $A$. Any 0-full sub simplicial set of a quasicategory is automatically a quasicategory \cite[page 275]{JoyalQuadern}. A sub quasicategory $W$ of a quasicategory $X$ is called {\it 1-full} if any simplex of $X$ is in $W$ if and only if all of its edges are in $W$.

\subsection{Homotopy and Equivalence in a Quasicategory} \label{subsec:homotopy_and_equivalence} \leavevmode \\

Boardman and Vogt associated to any quasicategory $X$ its homotopy category $hoX$. A 1-simplex of a quasicategory $X$ is called a {\it morphism}. Two parallel morphisms $f,g\co a \to b$ are {\it left homotopic} if there exists a 2-simplex $\sigma \in X_2$ with boundary $\partial \sigma=(d_0\sigma,d_1\sigma,d_2\sigma)=(1_b,g,f)$. They are {\it right homotopic} if there exists a 2-simplex $\sigma \in X_2$ with boundary $\partial \sigma=(g,f,1_a)$. They are {\it homotopic} if they are in the same path component of the {\it mapping space} $X(a,b)$, which is the following pullback.
\begin{equation} \label{equ:mapping_space_of_a_quasicategory}
\begin{array}{c}
\xymatrix{X(a,b) \ar[r] \ar[d] \ar@{}[dr]|{\text{pullback}} & X^{\Delta[1]} \ar[d]^{(s,t)} \\ \ast \ar[r]_-{(a,b)} & X \times X }
\end{array}
\end{equation}
All three notions of homotopy coincide and are an equivalence relation on the morphisms of the quasicategory $X$. The 0-simplices of $X$ together with the homotopy classes of morphisms form the {\it homotopy category} $hoX$ \cite{BoardmanVogt}. This category is isomorphic to the {\it fundamental category} $\tau_1X$, so the categories $hoX$ and $\tau_1X$ are identified without further mention when $X$ is a quasicategory. Here $\tau_1$ is the left adjoint to the nerve functor.

A morphism in $X$ is an {\it equivalence} if its homotopy class is an isomorphism in $hoX$, which is the case if and only if the morphism has a ``homotopy inverse'' in $X$. We denote the 1-full subquasicategory of $X$ on the equivalences by $X_\text{equiv}$. This is the maximal Kan subcomplex of $X$, and $(-)_\text{equiv}$ is right adjoint to the inclusion $\mathbf{Kan} \hookrightarrow \mathbf{SSet}$ \cite[Theorems 4.18 and 4.19]{JoyalQuadern}. If $X$ and $Y$ are quasicategories, then a natural transformation $\alpha\co X \times \Delta[1] \to Y$ is an equivalence in the quasicategory $Y^X$ if and only if each component $\alpha_x=\alpha(x,-)$ is an equivalence in $Y$, see \cite[Theorem 5.14]{JoyalQuadern}.

\begin{example}[Nerve of a category]
The nerve of any category $\calc$ is a quasicategory. The homotopy class of a morphism $f$ in $N\calc$ is simply $\{f\}$. The mapping space $(N\calc)(a,b)$ is $\calc(a,b)$ viewed as a discrete simplicial set. Consequently, the homotopy category of $N\calc$ is just $\calc$, and a morphism in $N\calc$ is an equivalence if and only if it is an isomorphism in $\calc$. Both of these consequences also follow from the fact that $\tau_1N\cong \text{Id}_\mathbf{Cat}$ (nerve is fully faithful). As expected, we now see as a special case of the above-mentioned \cite[Theorem 5.14]{JoyalQuadern}, the classical lemma that a natural transformation $\calc \times [1] \to \cald$ has components isomorphisms if and only if it is invertible in $\mathbf{Cat}(\calc,\cald)$.
\end{example}

\subsection{Join and Slice for Categories and for Simplicial Sets} \label{subsec:join_and_slice} \leavevmode \\

Recall that if $\cala$ and $\calb$ are ordinary categories, then the {\it join} $\cala \star \calb$ is the category with objects and morphisms

\begin{equation} \label{equ:join_of_categories_objects}
\ob \big( \cala \star \calb \big):=\ob (\cala) \sqcup \ob (\calb)
\end{equation}
\begin{equation} \label{equ:join_of_categories_morphisms}
\mor\big(\cala \star \calb\big):=\mor(\cala) \sqcup \mor(\calb) \sqcup \left( \underset{a \in \ob \cala, \; b \in \ob \calb }\bigsqcup \{f_{a,b}\co a \to b   \} \right).
\end{equation}
For, $a,a'\in \ob \cala$ and $b,b' \in \ob \calb$, any composite $\xymatrix@1@C=1.7pc{a' \ar[r] & a \ar[r]^{f_{a,b}} & b \ar[r] & b'}$ is the unique map $a' \to b'$, namely $f_{a',\,b'}$. All other possible compositions are already defined. For an example of a familiar join, notice that the join of any $(m+1)$-element linearly ordered set with any $(n+1)$-element linearly ordered set is an $(m+n+2)$-element linearly ordered set, so $[m]\star[n]=[m+n+1]$. If 1 denotes the terminal category, then $\cala \star 1$ is $\cala$ with a terminal object adjoined, while $1 \star \calb$ is $\calb$ with an initial object adjoined.

For any simplicial sets $A$ and $B$, the {\it join} simplicial set $A\star B$ of \cite[page 243]{JoyalQuadern} satisfies
\begin{equation} \label{equ:join_definition_quasicategories}
(A \star B)_n=A_n \sqcup B_n \sqcup \Bigg(\,\underset{i+1+j=n}{\bigsqcup}  A_i \times B_j \Bigg)
\end{equation}
by \cite[Proposition 3.2]{JoyalQuadern}. Compare equation \eqref{equ:join_definition_quasicategories} for $n=0$ and $n=1$ with the object and morphism formulas for the join of categories in equations \eqref{equ:join_of_categories_objects} and \eqref{equ:join_of_categories_morphisms} above. The face and degeneracy maps can be understood from the case where $A$ and $B$ are nerves of categories.
For this paper, we may take \eqref{equ:join_definition_quasicategories} as a definition. The empty simplicial set is a unit for join, $A \star \emptyset \cong A \cong \emptyset \star A$.

Clearly, $A \subseteq A\star B$ and $B \subseteq A\star B$, and there are two functors
$$\xymatrix{A \star (-)\co \mathbf{SSet} \ar[r] & (A \downarrow \mathbf{SSet}) }$$
$$\xymatrix{(-) \star B \co \mathbf{SSet} \ar[r] & (B \downarrow \mathbf{SSet}). }$$
These each admit a right adjoint called {\it slice} \cite[Proposition 3.12]{JoyalQuadern}, we denote their respective right adjoints as $a\backslash X$ and $X/b$ respectively for $a\co A \to X$ and $b \co B \to X$ in $\mathbf{SSet}$.  In particular, there are bi-natural bijections
\begin{equation} \label{equ:join_cocone_adjunction}
(A \downarrow \mathbf{SSet})\left(\begin{array}{c} \xymatrix{A \ar@{^{(}->}[d]
\\ A \star B }
\end{array}, \begin{array}{c} \xymatrix{A \ar[d]^a
\\ X }
\end{array} \right)\cong \mathbf{SSet}\left(B, a\backslash X \right)
\end{equation}
\begin{equation} \label{equ:join_cone_adjunction}
(B \downarrow \mathbf{SSet})\left(\begin{array}{c} \xymatrix{B \ar@{^{(}->}[d]
\\ A \star B }
\end{array}, \begin{array}{c} \xymatrix{B \ar[d]^b
\\ X }
\end{array} \right)\cong \mathbf{SSet}\left(A,  X/b \right).
\end{equation}
From these, we see an $n$-simplex $\Delta[n] \to a \backslash X$ is a map $A \star \Delta[n] \to X$ which extends $a$ along $A \subseteq A \star \Delta[n]$, while an $n$-simplex $\Delta[n] \to X/b$ is a map $\Delta[n] \star B \to X$ which extends $b$ along $B \subseteq  \Delta[n] \star B$.

When $X$ is a quasicategory, so are the slices $a \backslash X$ and $X /b$,
see \cite[Corollary 3.20, page 256]{JoyalQuadern} for the case of $X / b$.

The join and slice of simplicial sets is compatible with the join and slice of categories. The nerve functor sends the join of two categories to the join of their nerves \cite[Corollary 3.3]{JoyalQuadern}. So for instance, we may prove $\Delta[m]\star\Delta[n]\cong\Delta[m+n+1]$ by the sequence of isomorphisms
$$\Delta[m]\star\Delta[n]\cong N([m]\star[n])\cong N([m+n+1]) =\Delta[m+n+1].$$
The nerve preserves the slice operations \cite[Proposition 3.13]{JoyalQuadern}.
$$N(a\backslash \calc) \cong a\backslash N(\calc) \;\;\;\;\;\; N(\calc/b) \cong N(\calc)/b $$

The relationship between topological joins, categorical joins, and simplicial joins has been worked out by Fritsch--Golasi{\'n}ski \cite{FritschGolasinskiJoins}.

\subsection{Colimits in a Quasicategory} \label{subsec:colimits_in_a_quasicategory} \leavevmode \\

Joyal defined the notion of colimit in a quasicategory $X$ using join and slice as follows \cite[Definition 4.5]{JoyalQCatsAndKanComplexes}, see also \cite[page 159]{JoyalQuadern} and \cite[pages 46-49]{LurieHigherToposTheory}. An object $i$ in a quasicategory $X$ is {\it initial} if for every object $x$ in $X$ the map $X(i,x) \to \text{pt}$ is a weak homotopy equivalence. Any two initial objects of $X$ are equivalent, in the sense that there is an equivalence from one to the other. Clearly, this equivalence is even homotopically unique.
If $A$ is a simplicial set and $a\co A \to X$ is a map of simplicial sets, then a {\it cocone with base $a$} is a 0-simplex of $a\backslash X$. In other words, a cocone with base $a$ is a map $A \star 1 \to X$ which extends $a \co A \to X$ along $A \subseteq A \star 1$. A {\it colimiting cocone for $a$} is an initial object of $a\backslash X$, in other words a cocone $\eta \co A \star 1 \to X$ with base $a$ which is initial. A {\it colimit of $a$} is the value of a colimiting cocone $A \star 1 \to X$ at the unique vertex of the terminal simplicial set $1$.

If $\calc$ is a category, then all of these concepts in $N \calc$ coincide with the usual 1-category notions in $\calc$ because nerve commutes with join and slice, nerve is fully faithful, and $(N\calc)(a,b)$ is $\calc(a,b)$ viewed as a discrete simplicial set.

I thank David Gepner for sketching the following proposition to me. It will be used in Proposition~\ref{prop:SF_equivalence}, the equivalence of $S_n^\infty\calc$ with $\overline{\calf_{n-1}^\infty} \calc$.

\begin{proposition} \label{prop:colimiting_cocone_equivalence}
Let $A$ be a simplicial set, $X$ a quasicategory which admits $A$-shaped colimits, and $\mathrm{ColimCocones}\big(X^{A \star 1}\big)$ the subquasicategory of $X^{A \star 1}$ that is 0-full on the colimiting cocones. Then the restriction map $$\xymatrix{r\co \mathrm{ColimCocones}\big(X^{A \star 1}\big) \ar[r] & X^A}$$ is an equivalence of quasicategories.
\end{proposition}

\subsection{Overquasicategories and Quasicategorical Quillen Theorem A} \label{subsec:Quasicategorical_Thm_A} \leavevmode \\

For the Pre-Approximation Theorem~\ref{thm:Pre-Approximation} we need overquasicategories and the quasicategorical Quillen Theorem~A.

\begin{definition}[Overquasicategory $(Y \downarrow y)$]
Let $y$ be an object of a quasicategory $Y$. An {\it $n$-simplex over $y$} is an $(n+1)$-simplex $z \in Y_{n+1}$ such that $(\iota_{n+1})^*(z)=y$, where $\iota_{n+1}\co [0] \to [n+1]$ is $\iota_{n+1}(0)=n+1$. The simplices over $y$ form the {\it overquasicategory} $(Y \downarrow y)$. The {\it projection}
\begin{equation} \label{equ:over-quasicategory_projection}
\xymatrix{q\co (Y \downarrow y) \ar[r] & Y}
\end{equation}
is $z \mapsto d_{n+1}z$.
\end{definition}

\begin{remark} \label{rem:remarks_on_over-quasicategory}
The overquasicategory $(Y \downarrow y)$ is isomorphic to Joyal's slice $Y/y$ recalled in Section~\ref{subsec:join_and_slice}. An $n$-simplex of $Y/y$ is a map $z \co \Delta[n] \star \Delta[0] \to Y$ which extends $y \co \Delta[0] \to Y$, but the domain is $\Delta[n] \star \Delta[0] \cong \Delta[n+1]$, so such a map $z$ is the same as a map $z\co \Delta[n+1] \to Y$ with $z \circ \Delta[\iota_{n+1}]=y$. Under this isomorphism $Y/y \cong (Y \downarrow y)$, the restriction of $z \co \Delta[n]\star\Delta[0] \to Y$ to $\Delta[n]$ corresponds to the projection \eqref{equ:over-quasicategory_projection}.

The simplicial set $Y\downarrow y$ is a quasicategory when $Y$ is, by \cite[Corollary 3.20, page 256]{JoyalQuadern}. Lurie writes $Y_{/ y}$ for $(Y \downarrow y)$ and $Y/y$, see \cite[pages 42-43]{LurieHigherToposTheory}. The nerve preserves the slice operations, so if $\calc$ is a category, then $N(\calc \downarrow c)\cong(N\calc \downarrow c)$, see  \cite[Proposition 3.13, page 250]{JoyalQuadern}.
\end{remark}

\begin{definition}[Over simplicial set $(G \downarrow y)$] \label{def:(f_downarrow_y)}
Let $X$ be a simplicial set, $y$ an object of a quasicategory $Y$, and $G\co X \to Y$ a map of simplicial sets. An {\it $n$-simplex $G$-over $y$} is a pair $(x \in X_n, z \in (Y \downarrow y)_n)$ such that $G(x)=q(z)$. The simplices $G$-over $y$ form the {\it over simplicial set} $(G \downarrow y)$, it is the pullback
$$\xymatrix@C=3pc@R=3pc{(G \downarrow y) \ar[r] \ar[d] \ar@{}[dr]|{\text{pullback}} & X \ar[d]^G \\ (Y \downarrow y) \ar[r]_-q & Y.}$$
\end{definition}

In the notation of Lurie, $(G \downarrow y)$ would be $X \times_Y Y_{/y}$, see \cite[page 236]{LurieHigherToposTheory}. The simplicial set $(G \downarrow y)$ is like a {\it homotopy fiber}. The {\it left fiber} $G/(0,y)$ on page 337 of Waldhausen \cite{WaldhausenAlgKTheoryI}, which is the pullback of $G\co X \to Y$ along $y\co \Delta[0] \to Y$, is like a {\it strict fiber}.

\begin{theorem}[Quasicategorical Quillen Theorem A, \cite{JoyalI}] \label{thm:Quasicategorical_Quillen_Theorem_A}
Let $Y$ be a quasicategory, $X$ a simplicial set, and $G \co X \to Y$ a map of simplicial sets. If the over simplicial set $(G \downarrow y)$ is weakly contractible for every object $y$ of $Y$, then $G$ is a weak homotopy equivalence.
\end{theorem}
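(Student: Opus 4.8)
The plan is to carry out, in the category of simplicial sets, Quillen's original argument for Theorem~A (also used by Waldhausen, cf.\ \cite{WaldhausenAlgKTheoryI}), using the join/slice calculus of Section~\ref{sec:recollections} together with one classical external input: the \emph{realization lemma} for bisimplicial sets, that a map of bisimplicial sets which is a weak homotopy equivalence in each row (equivalently, in each column) induces a weak homotopy equivalence on diagonals.

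First I would extend $(G \downarrow -)$ to all simplices of $Y$: for $\tau\co\Delta[p]\to Y$, put $(G\downarrow\tau):=X\times_Y(Y/\tau)$, the pullback of $G$ along the projection $Y/\tau\to Y$ that restricts a $q$-simplex $\Delta[q]\star\Delta[p]\to Y$ to its front face $\Delta[q]$; when $\tau=y$ is a vertex this recovers $(G\downarrow y)$ under the identification $(Y\downarrow y)\cong Y/y$. The claim is that $(G\downarrow\tau)$ is weakly contractible for \emph{every} simplex $\tau$. Indeed, the last-vertex inclusion $\{p\}\hookrightarrow\Delta[p]$ is right anodyne, so $Y/\tau\to Y/\tau(p)$ is a trivial fibration \cite{JoyalQuadern}, and pulling back along $G$ makes $(G\downarrow\tau)\to(G\downarrow\tau(p))$ a trivial fibration onto the weakly contractible space $(G\downarrow\tau(p))$ (here $\tau(p)$ is a vertex, so this is the hypothesis). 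Dually, for $\rho\co\Delta[q]\to Y$ the coslice $\rho\backslash Y$ is weakly contractible: the first-vertex inclusion $\{0\}\hookrightarrow\Delta[q]$ is left anodyne, so $\rho\backslash Y\to\rho(0)\backslash Y$ is a trivial fibration \cite{JoyalQuadern}, and $\rho(0)\backslash Y$ has an initial object $\mathrm{id}_{\rho(0)}$, hence is weakly contractible \cite{LurieHigherToposTheory}.

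Next I would form the bisimplicial set $\mathcal{Z}$ with $\mathcal{Z}_{p,q}$ the set of pairs $(w,x)$, where $w\co\Delta[q]\star\Delta[p]\to Y$ and $x\co\Delta[q]\to X$ with $Gx$ equal to the restriction of $w$ to the front face $\Delta[q]\subseteq\Delta[q]\star\Delta[p]$; the $[p]$-direction structure maps precompose $w$ with $\mathrm{id}\star(-)$, the $[q]$-direction structure maps precompose $w$ with $(-)\star\mathrm{id}$ and $x$ correspondingly, and one checks these are natural so that $\mathcal{Z}$ is genuinely bisimplicial. Grouping a pair $(w,x)$ according to the back face $\tau:=w|_{\Delta[p]}\in Y_p$ identifies the simplicial set $\mathcal{Z}_{p,\bullet}$ with $\coprod_{\tau\in Y_p}(G\downarrow\tau)$, so the forgetful map $\mathcal{Z}_{p,\bullet}\to\coprod_{\tau\in Y_p}\Delta[0]$ is a weak homotopy equivalence for every $p$ by the first claim; the realization lemma then yields that $\mathrm{diag}\,\mathcal{Z}\to Y$, $(w,x)\mapsto w|_{\Delta[p]}$ (the back face), is a weak homotopy equivalence. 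Symmetrically, grouping $(w,x)$ by the front face $\rho:=Gx=w|_{\Delta[q]}$ identifies $\mathcal{Z}_{\bullet,q}$ with $\coprod_{x\in X_q}(Gx\backslash Y)$, whose forgetful map to $\coprod_{x\in X_q}\Delta[0]$ is a weak homotopy equivalence for every $q$ by the dual claim, so $\mathrm{diag}\,\mathcal{Z}\to X$, $(w,x)\mapsto x$, is a weak homotopy equivalence.

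It remains to check these two equivalences are compatible with $G$, i.e.\ that $G\circ\bigl((w,x)\mapsto x\bigr)$ is homotopic to $(w,x)\mapsto w|_{\Delta[p]}$. For this I would use the order-preserving maps $[n]\times[1]\to[n]\star[n]$ sending $(i,0)\mapsto i$ and $(i,1)\mapsto n+1+i$; these are natural in $[n]$, so precomposing $w$ with them gives a simplicial homotopy $\mathrm{diag}\,\mathcal{Z}\times\Delta[1]\to Y$ from the front-face map (which is $G$ applied to the $X$-projection) to the back-face map. Applying geometric realization and composing with a homotopy inverse of the realization of $(w,x)\mapsto x$ then exhibits $|G|$ as a homotopy equivalence, hence $G$ is a weak homotopy equivalence. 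I expect the only real difficulty to be organizational: keeping track of which join factor is being sliced against, and hence which vertex inclusion ($\{0\}\hookrightarrow\Delta[q]$, left anodyne, versus $\{p\}\hookrightarrow\Delta[p]$, right anodyne) produces the trivial fibration that collapses a simplex-slice to a vertex-slice, together with verifying that the various assignments really are natural in both simplicial directions. (An alternative would be to deduce the theorem from the general theory of cofinal/coinitial maps of quasicategories and the formula for colimits of constant diagrams, at the cost of importing considerably heavier machinery.)
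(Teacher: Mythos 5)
Your proposal is correct, and it takes a genuinely different route from the paper. The paper does not prove Theorem~\ref{thm:Quasicategorical_Quillen_Theorem_A} from scratch; it cites Lurie's Proposition~4.1.1.3.(3) and Theorem~4.1.3.1 of \cite{LurieHigherToposTheory}, whose proofs go through the contravariant model structure and the theory of cofinal maps (the ``considerably heavier machinery'' you mention as an alternative at the end). You instead transcribe Quillen's original bisimplicial argument into the join/slice calculus. The key inputs are exactly right: the right-anodyne last-vertex inclusion $\{p\}\hookrightarrow\Delta[p]$ gives the trivial fibration $Y/\tau\to Y/\tau(p)$ so that each $(G\downarrow\tau)$ inherits contractibility from $(G\downarrow\tau(p))$; the left-anodyne first-vertex inclusion $\{0\}\hookrightarrow\Delta[q]$ together with the initial object $\mathrm{id}_{\rho(0)}\in\rho(0)\backslash Y$ gives contractibility of $\rho\backslash Y$ for every simplex $\rho$ (this is where $Y$ being a quasicategory is used in an essential way); the bisimplicial set $\mathcal{Z}_{p,q}$ of joins $\Delta[q]\star\Delta[p]\to Y$ with a chosen $X$-lift of the front face has rows $\coprod_\tau(G\downarrow\tau)$ and columns $\coprod_x(Gx\backslash Y)$; and the realization lemma collapses both directions. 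The homotopy $[n]\times[1]\to[n]\star[n]$, $(i,\epsilon)\mapsto\epsilon(n+1)+i$, that you use to compare the front- and back-face projections off $\mathrm{diag}\,\mathcal{Z}$ is indeed natural in $[n]$ (it commutes with the simultaneous $\theta\star\theta$ reindexing in both factors), so the final 3-for-2 step goes through. Your approach buys self-containment and stays close to Waldhausen's original style (and to the realization-lemma technique already used in Theorem~\ref{thm:Approximation_when_F_induces_cof_equiv}), at the cost of more hands-on simplicial bookkeeping; the paper's citation buys brevity at the cost of importing the full cofinality apparatus. One cosmetic remark: the paper states the theorem for $(G\downarrow y)$ but cites Lurie's $(y\downarrow G)$ version ``by analogy,'' whereas your argument proves the over-version directly, which is a small gain in transparency.
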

For a proof of the analogous version of Theorem~\ref{thm:Quasicategorical_Quillen_Theorem_A} for $(y \downarrow G)$, see Proposition 4.1.1.3.(3) and Theorem 4.1.3.1 of Lurie on pages 223 and 236 of \cite{LurieHigherToposTheory}. For another variant of Quillen's Theorem~A in the setting of quasicategories, see Heuts--Moerdijk \cite[Proposition~G]{HeutsMoerdijk_1}.

\subsection{Barycentric Subdivision and Kan's Functor \text{Ex}} \label{subsec:barycentric_subdivision_and_Ex} \leavevmode \\

In Proposition~\ref{prop:extracted_from_Waldhausen} we briefly need some properties of the adjunction $\text{Sd} \dashv \text{Ex}$ on $\mathbf{SSet}$, so we review it here. The {\it subdivision of the standard $n$-simplex}, denoted $\text{Sd}\,\Delta[n]$, is the nerve of the poset of non-degenerate simplices of $\Delta[n]$. In other words, $\text{Sd}\,\Delta[n]$ is the nerve of the poset of non-empty subsets of $\{0, \dots, n\}$, denoted $\big(\mathcal{P}[n]\big)\backslash \{\emptyset\}$. The {\it subdivision of a simplicial set $X$} is the colimit
$$\text{Sd}\,X:=\underset{n,\Delta[n]\to X}{\text{colim}}\,\text{Sd}\,\Delta[n]$$
where the colimit is over the {\it category of simplices of $X$}. The subdivision functor $\text{Sd} \colon \mathbf{SSet} \to \mathbf{SSet}$ is left adjoint to {\it Kan's functor} $\text{Ex}\colon \mathbf{SSet} \to \mathbf{SSet}$, as one can infer from the definition of its $n$-simplices.
$$\left(\text{Ex}\,Y\right)_n:=\mathbf{SSet}(\text{Sd}\,\Delta[n],Y)$$
For a map $g$ of simplicial sets, $\text{Ex}\,g$ is given by postcomposition with $g$.

The {\it $n$-th last vertex map} $\ell[n] \colon \text{Sd}\,\Delta[n] \to \Delta[n]$ is the nerve of the functor
$$\xymatrix{\big(\mathcal{P}[n]\big)\backslash \{\emptyset\} \ar[r] & [n]}$$
$$\xymatrix{\{v_0 < v_1 < \cdots < v_k \} \ar@{|->}[r] & v_k}.$$
The $n$-th last vertex map $\ell[n]$ is a simplicial homotopy equivalence, and the last vertex maps all together induce via precomposition a natural weak homotopy equivalence
\begin{equation} \label{equ:Y_to_ExY}
\begin{array}{c}
\xymatrix{\ell^* \colon Y \ar[r] & \text{Ex}\,Y} \\
\xymatrix{\big(\Delta[n] \to Y \big) \ar@{|->}[r] & \big(\text{Sd}\,\Delta[n] \overset{\ell[n]}{\to} \Delta[n] \overset{y}{\to} Y \big) }
\end{array}
\end{equation}
If $Y$ is based, then \eqref{equ:Y_to_ExY} is a basepoint-preserving map.  See \cite[pages 182-188]{GoerssJardine} for a recent treatment of the foregoing topics.

\subsection{A Criterion for a Simplicial Set to be Weakly Contractible} \label{subsec:criterion_for_weak_contractibility} \leavevmode \\

The next proposition is a simplicial version of the categorical result \cite[Lemma 14 on page 131]{Schlichting} by Schlichting, which he extracted from Waldhausen's paper \cite[pages 354--356]{WaldhausenAlgKTheoryI}. This proof is an adaptation of Schlichting's extraction to the situation of $\calc$ a simplicial set (instead of a category), with a few more details. This proposition is a key ingredient in the proof of the Pre-Approximation Theorem~\ref{thm:Pre-Approximation}.

\begin{proposition} \label{prop:extracted_from_Waldhausen}
Let $\calc$ be a nonempty, connected simplicial set. If for every connected finite poset $\calp$, every map $N\calp \to \calc$ can be extended to a map $(N\calp)\star 1 \to \calc$, then $\calc$ is weakly contractible.
\end{proposition}
\begin{proof}
We show that each homotopy group of the realization $\vert \calc \vert$ is trivial.

For $n=0$, we have $\pi_0|\calc|=\ast$ because $\calc$ is a connected\footnote{Recall that a simplicial set is {\it connected} if any two vertices can be connected by a zig-zag path of edges.} simplicial set by assumption, which implies that $\vert \calc \vert$ is path-connected.

Let $n \geq 1$, and let $\text{Ex}^{\infty}\co \mathbf{SSet} \to \mathbf{SSet}$ be the fibrant replacement functor of Kan reviewed in Section~\ref{subsec:barycentric_subdivision_and_Ex}.
Fix a vertex $c$ of $\calc$, and let $S^n_{\text{simp}}$ be any simplicial model of the $n$-sphere with only finitely many non-degenerate simplices. In the following, we make use of the isomorphisms
$$\aligned \pi_n(\vert \calc\vert,c)\overset{\text{def}}{=}\big[\,\vert S^n_\text{simp}\vert,\vert \calc\vert\,\big]_{\text{based}} & \overset{(1)}{\cong} \big[\,\vert S^n_\text{simp}\vert,\vert \text{Ex}^\infty\,\calc\vert\,\big]_{\text{based}} \\
& \overset{(2)}{\cong} \big[S^n_{\text{simp}}, \text{Ex}^\infty\,\calc\big]_{\text{based}}\overset{\text{def}}{=}\pi_n(\text{Ex}^\infty\,\calc,c),
\endaligned$$
where isomorphism (1) arises from the realization of the weak homotopy equivalence $\calc \to \text{Ex}^\infty\,\calc$, and isomorphism (2) is the identification of the simplicial homotopy groups of the fibrant simplicial set $\text{Ex}^\infty\,\calc$ with the standard homotopy groups of its realization. The inverse of isomorphism (2) is induced by geometric realization.

Consider a homotopy class in $\pi_n(\vert \calc\vert,c)$ and let $\alpha\co S^n_{\text{simp}}\to \text{Ex}^\infty\,\calc$ represent it via the composite of isomorphisms (1) and (2) above. We would like to show that $\vert \alpha \vert$ is based homotopic to constant $c$ to conclude the original homotopy class in $\pi_n(\vert \calc\vert,c)$ is zero via isomorphism (1).

Since $\mathbf{SSet}(S^n_{\text{simp}},-)$ commutes with directed colimits and $\text{Ex}^\infty\,\calc$ is the colimit of the directed diagram of weak homotopy equivalences
$$\xymatrix{\calc \ar[r] & \text{Ex}\,\calc \ar[r] & \text{Ex}^2\,\calc \ar[r] & \text{Ex}^3\,\calc \ar[r] & \cdots}$$
(see Kan \cite[Section 4]{KanEx} or Goerss--Jardine \cite[page 188]{GoerssJardine}),
the basepoint preserving map $\alpha\co S^n_{\text{simp}}\to \text{Ex}^\infty\,\calc$ factors through some $\beta\co S^n_{\text{simp}}\to \text{Ex}^k\,\calc$. More concretely, for each non-degenerate simplex $e$ of $S^n_{\text{simp}}$, pick an index $\ell(e)$ such that $\alpha(e)$ is in the image of $\text{Ex}^{\ell(e)}\,\calc$ in $\text{Ex}^\infty\,\calc$, and then let $k=2 +\text{max}_e \;\ell(e)$. Realizing, we also have the factorization
$$\xymatrix{\vert S^n_{\text{simp}}\vert \ar[r]_{|\beta|} \ar@/^1.6pc/[rr]^{\vert \alpha \vert} & \vert \text{Ex}^k\,\calc \vert \ar[r] & \vert \text{Ex}^\infty\,\calc\vert}.$$

We claim that it suffices to show $\vert\beta\vert$ is unbased homotopic to a constant map. For if this is the case, then the based map $\vert \alpha \vert$ is also unbased homotopic to a constant map, which implies $\vert \alpha \vert$ is based homotopic to $c$ by \cite[Corollary~7.3]{BredonBook}, in other words implies $\vert \alpha \vert$ is zero in  $[\,\vert S^n_\text{simp}\vert,\vert \text{Ex}^\infty\,\calc\vert\,]_{\text{based}}$.

So we show $\vert\beta\vert$ is unbased homotopic to a constant map. The barycentric subdivision $\text{Sd}\co \mathbf{SSet} \to \mathbf{SSet}$ is left adjoint to $\text{Ex}$ (see Kan \cite[Section 7]{KanEx} or Goerss--Jardine \cite[page 183]{GoerssJardine} or Fritsch--Piccinini \cite[4.6]{FritschPiccinini}), so $\beta\co S^n_{\text{simp}} \to \text{Ex}^k\, \calc$ corresponds to its transpose $\beta^\dagger \co \text{Sd}^k \,S^n_{\text{simp}} \to \calc$. Since $k \geq 2$, the domain finite simplicial set $\text{Sd}^k\,S^n_{\text{simp}}$ is the nerve of a finite poset (see Thomason \cite{Thomason}). This poset must be connected, as $\text{Sd}^k\,S^n_{\text{simp}}$ is connected.  By hypothesis, $\beta^\dagger$ now extends to $\overline{\beta^\dagger}\co \left( \text{Sd}^k S^n_{\text{simp}} \right) \star 1 \to \calc $. The geometric realization $\vert \text{Sd}^k S^n_{\text{simp}} \star 1 \vert$ is homeomorphic to an $(n+1)$-ball, so $\vert \beta^\dagger\vert$ is unbased homotopic to any constant in the image of this $(n+1)$-ball.

To next see that $|\beta|$ is unbased homotopic to a constant map, consider the following two commutative squares of continuous maps, which arise as the geometric realization of commutative squares of simplicial maps.
$$\xymatrix@C=4pc@R=3pc{ |S^n_{\text{simp}}| \ar[r]^-{|\beta|} \ar[d]_{| \text{unit} |} & |\text{Ex}^k\, \calc | \ar@{=}[d] \\ |\text{Ex}^k\, \text{Sd}^k \, S^n_{\text{simp}}| \ar[r]^-{|\text{Ex}^k \, \beta^\dagger|} \ar@{.>}@/_2.5pc/[d]_{(\text{h.e.})^{-1}} & |\text{Ex}^k \, \calc|  \\ |\text{Sd}^k \, S^n_{\text{simp}}| \ar[u]^{\eqref{equ:Y_to_ExY}}_{\text{h.e.}} \ar[r]_-{|\beta^\dagger|} & |\calc| \ar[u]^{\text{h.e.}}_{\eqref{equ:Y_to_ExY}} }$$
The top square is from the definition of transpose $\beta^\dagger$ for the adjunction $\text{Sd}^k \dashv \text{Ex}^k$, while the bottom square comes from the naturality square for iteration of the weak homotopy equivalence $Y \to \text{Ex}\, Y$ in \eqref{equ:Y_to_ExY}. The dotted map is any homotopy inverse to its adjacent map.

Tracing the left, bottom, and right, we now have a homotopy
$$|\beta| \simeq  (\text{h.e.}) \circ |\beta^\dagger| \circ (\text{h.e.})^{-1} \circ |\text{unit}|.$$
But since, $|\beta^\dagger|$ is unbased homotopic to a constant, so is the right-hand side, and so is $|\beta|$.

Thus $|\alpha|$ is based homotopic to constant $c$ and $\pi_n(|\calc|,c)$ is trivial for all $n \geq 0$.
\end{proof}

\section*{Acknowledgements}

{\bf Scientific Acknowledgements.} I thank Andr{\'e} Joyal for his course on quasicategories at the Centre de Recerca Matem\`atica (Barcelona) during the thematic programme on Homotopy Theory and Higher Categories in academic year 2007/08. The organizers of that glorious year have my gratitude:  Carles Casacuberta, Andr{\'e} Joyal, Joachim Kock, Amnon Neeman, and Frank Neumann.
There I learned the quasicategory theory used in this article, while supported on grant SB2006-0085 of the Spanish
Ministerio de Educaci\'{o}n y Ciencia at the Universitat Aut\`{o}noma de Barcelona.

I especially thank Andrew Blumberg for his excellent comments, and for answering my questions concerning \cite{BlumbergGepnerTabuadaI}, \cite{BlumbergMandellAbstHomTheory}, and the general topic of Approximation. His critical suggestion to consider the cofibration subcategories more closely led me to discover Corollary~\ref{cor:Approximation_when_F_induces_cof_equiv}. I thank Georgios Raptis for occasional conversations, and especially for suggesting and discussing Lemma~\ref{lem:ImageOf_hoF}.
I also thank David Gepner for previous discussions on $K$-theory and quasicategories. I thank Denis-Charles Cisinski for helpful comments and suggestions in July 2017.

I thank the Regensburg {\it Sonderforschungsbereich 1085: Higher Invariants} and the Regensburg Mathematics Department for a very stimulating working environment during during my September 2015 -- July 2016 sabbatical visit, and during his short visit July 3 -- July 16, 2017.

{\bf Financial Acknowledgements.} I gratefully acknowledge support from several sources during the genesis of this paper. A Humboldt Research Fellowship for Experienced Researchers supported me during my September 2015 -- July 2016 sabbatical at Universit{\"a}t Regensburg. The Max-Planck-Institut f\"ur Mathematik supported my research stays in Bonn in May -- June 2011 and July 2013. A Small Grant for Faculty Research from the University of Michigan-Dearborn supported some travel costs to Bonn. The Regensburg {\it Sonderforschungsbereich 1085: Higher Invariants} supported a short visit July 3 -- July 16, 2017 in Regensburg.




\begin{thebibliography}{10}

\bibitem{Barwick}
Clark Barwick.
\newblock On the algebraic {$K$}-theory of higher categories.
\newblock {\em J. Topol.}, 9(1):245--347, 2016.

\bibitem{BlumbergGepnerTabuadaI}
Andrew~J. Blumberg, David Gepner, and Gon{\c{c}}alo Tabuada.
\newblock A universal characterization of higher algebraic {$K$}-theory.
\newblock {\em Geom. Topol.}, 17(2):733--838, 2013.

\bibitem{BlumbergMandellActa}
Andrew~J. Blumberg and Michael~A. Mandell.
\newblock The localization sequence for the algebraic {$K$}-theory of
  topological {$K$}-theory.
\newblock {\em Acta Math.}, 200(2):155--179, 2008.

\bibitem{BlumbergMandellAbstHomTheory}
Andrew~J. Blumberg and Michael~A. Mandell.
\newblock Algebraic {$K$}-theory and abstract homotopy theory.
\newblock {\em Adv. Math.}, 226(4):3760--3812, 2011.

\bibitem{BoardmanVogt}
J.~M. Boardman and R.~M. Vogt.
\newblock {\em Homotopy invariant algebraic structures on topological spaces}.
\newblock Lecture Notes in Mathematics, Vol. 347. Springer-Verlag, Berlin,
  1973.

\bibitem{BousfieldKanBook}
A.~K. Bousfield and D.~M. Kan.
\newblock {\em Homotopy limits, completions and localizations}.
\newblock Lecture Notes in Mathematics, Vol. 304. Springer-Verlag, Berlin-New
  York, 1972.

\bibitem{BredonBook}
Glen~E. Bredon.
\newblock {\em Topology and geometry}, volume 139 of {\em Graduate Texts in
  Mathematics}.
\newblock Springer-Verlag, New York, 1997.
\newblock Corrected third printing of the 1993 original.

\bibitem{CisinskiInvarianceOfK-Theory}
Denis-Charles Cisinski.
\newblock Invariance de la {$K$}-th\'eorie par \'equivalences d\'eriv\'ees.
\newblock {\em J. K-Theory}, 6(3):505--546, 2010.

\bibitem{DuggerShipley}
Daniel Dugger and Brooke Shipley.
\newblock {$K$}-theory and derived equivalences.
\newblock {\em Duke Math. J.}, 124(3):587--617, 2004.

\bibitem{FiorePieper}
Thomas~M. Fiore and Malte Pieper.
\newblock Waldhausen {A}dditivity: Classical and {Q}uasicategorical.
\newblock {\em To appear in the Journal of Homotopy and Related Structures},
  2018.

\bibitem{FritschGolasinskiJoins}
Rudolf Fritsch and Marek Golasi{\'n}ski.
\newblock Topological, simplicial, and categorical joins.
\newblock {\em Arch. Math. (Basel)}, 82(5):468--480, 2004.

\bibitem{FritschPiccinini}
Rudolf Fritsch and Renzo~A. Piccinini.
\newblock {\em Cellular structures in topology}, volume~19 of {\em Cambridge
  Studies in Advanced Mathematics}.
\newblock Cambridge University Press, Cambridge, 1990.

\bibitem{GoerssJardine}
Paul~G. Goerss and John~F. Jardine.
\newblock {\em Simplicial homotopy theory}.
\newblock Modern Birkh\"auser Classics. Birkh\"auser Verlag, Basel, 2009.
\newblock Reprint of the 1999 edition.

\bibitem{HeutsMoerdijk_1}
Gijs Heuts and Ieke Moerdijk.
\newblock Left fibrations and homotopy colimits.
\newblock {\em Preprint}, 2014, \url{http://arxiv.org/abs/1308.0704}.

\bibitem{JoyalI}
Andr{\'e} Joyal.
\newblock {\em Theory of quasicategories {I}}.
\newblock In preparation.

\bibitem{JoyalQCatsAndKanComplexes}
Andr{\'e} Joyal.
\newblock Quasi-categories and {K}an complexes.
\newblock {\em J. Pure Appl. Algebra}, 175(1-3):207--222, 2002.
\newblock Special volume celebrating the 70th birthday of Professor Max Kelly.

\bibitem{JoyalQuadern}
Andr{\'e} Joyal.
\newblock {\em The theory of quasi-categories and its applications}.
\newblock Quadern 45, Vol. II, Centre de Recerca Matem\`{a}tica Barcelona.
  2008,
  \url{http://mat.uab.cat/~kock/crm/hocat/advanced-course/Quadern45-2.pdf}.

\bibitem{KanEx}
Daniel~M. Kan.
\newblock On c. s. s. complexes.
\newblock {\em Amer. J. Math.}, 79:449--476, 1957.

\bibitem{LurieHigherToposTheory}
Jacob Lurie.
\newblock {\em Higher topos theory}, volume 170 of {\em Annals of Mathematics
  Studies}.
\newblock Princeton University Press, Princeton, NJ, 2009.

\bibitem{LurieHigherAlgebra}
Jacob Lurie.
\newblock Higher algebra.
\newblock {\em Preprint}, 2011, \url{http://www.math.harvard.edu/~lurie/}.

\bibitem{NeemanInventiones}
Amnon Neeman.
\newblock Stable homotopy as a triangulated functor.
\newblock {\em Invent. Math.}, 109(1):17--40, 1992.

\bibitem{NeemanIA}
Amnon Neeman.
\newblock {$K$}-theory for triangulated categories. {I}({A}). {H}omological
  functors.
\newblock {\em Asian J. Math.}, 1(2):330--417, 1997.

\bibitem{NeemanIB}
Amnon Neeman.
\newblock {$K$}-theory for triangulated categories. {I}({B}). {H}omological
  functors.
\newblock {\em Asian J. Math.}, 1(3):435--529, 1997.

\bibitem{RognesTextbook}
John Rognes.
\newblock Lecture notes on algebraic {$K$}-theory.
\newblock \url{http://folk.uio.no/rognes/kurs/mat9570v10/akt.pdf}.

\bibitem{SagaveSpecialApproximation}
Steffen Sagave.
\newblock On the algebraic {$K$}-theory of model categories.
\newblock {\em J. Pure Appl. Algebra}, 190(1-3):329--340, 2004.

\bibitem{SchlichtingInventiones}
Marco Schlichting.
\newblock A note on {$K$}-theory and triangulated categories.
\newblock {\em Invent. Math.}, 150(1):111--116, 2002.

\bibitem{Schlichting}
Marco Schlichting.
\newblock Negative {$K$}-theory of derived categories.
\newblock {\em Math. Z.}, 253(1):97--134, 2006.

\bibitem{Thomason}
R.~W. Thomason.
\newblock Cat as a closed model category.
\newblock {\em Cahiers Topologie G\'eom. Diff\'erentielle}, 21(3):305--324,
  1980.

\bibitem{ThomasonTrobaugh}
R.~W. Thomason and Thomas Trobaugh.
\newblock Higher algebraic {$K$}-theory of schemes and of derived categories.
\newblock In {\em The {G}rothendieck {F}estschrift, {V}ol.\ {III}}, volume~88
  of {\em Progr. Math.}, pages 247--435. Birkh\"auser Boston, Boston, MA, 1990.

\bibitem{ToenVezzosi}
Bertrand To{\"e}n and Gabriele Vezzosi.
\newblock A remark on {$K$}-theory and {$S$}-categories.
\newblock {\em Topology}, 43(4):765--791, 2004.

\bibitem{WaldhausenAlgKTheoryI}
Friedhelm Waldhausen.
\newblock Algebraic {$K$}-theory of spaces.
\newblock In {\em Algebraic and geometric topology ({N}ew {B}runswick,
  {N}.{J}., 1983)}, volume 1126 of {\em Lecture Notes in Math.}, pages
  318--419. Springer, Berlin, 1985.

\end{thebibliography}

\end{document}